\newif \ifIMA
\numberwithin{equation}{section}
\newtheorem{theorem}{Theorem}[section]{\bfseries}{\it}
\newtheorem{proposition}[theorem]{Proposition}{\bfseries}{\it}
\newtheorem{lemma}[theorem]{Lemma}{\bfseries}{\it}
{\bfseries}{\it}
\newtheorem{definition}[theorem]{Definition}{\bfseries}{\it}
{\bfseries}{\it}
{\bfseries}{\it}
\theoremstyle{definition}
\newtheorem{remark}[theorem]{Remark}{\bfseries}{\rmfamily}
\newcommand{\GD}{\Gamma_{\mathrm D}}
\newcommand{\GN}{\Gamma_{\mathrm N}}
\newcommand{\bH}{{\bm H}}
\newcommand{\bL}{{\bm L}}
\newcommand{\bC}{{\bm C}}
\def\Hunz{{{H_0^1(\Omega)}}}
\def\HdivK{{{\bH(\textup{div};K)}}}
\def\Hdivzero{{{\bH_{0,{\Gamma_{\mathrm{N}}}}(\textup{div},\Omega)}}}
\newcommand{\RTNK}{\bm{RTN}_p(K)}
\newcommand{\RTNKp}{\bm{RTN}_p(K^{'\!\!})}
\newcommand{\RTNKm}{\bm{RTN}_{p-1}(K)}
\newcommand{\Eglob}{E_{\calT\kern-0.2ex,p}}
\newcommand{\Eloc}{e_{K,p}}
\newcommand{\Eloct}{e_{K^{'\!\!},p}}
\newcommand{\Elocp}{e_{K,p-1}}
\newcommand{\p}{\partial}
\newcommand{\ver}{{\bm{a}}}
\newcommand{\Ta}{\calT_{\ver}}
\newcommand{\TF}{\calT_{F}}
\newcommand{\calFaint}{\mathcal{F}_{\ver}^{\mathrm{in}}}
\newcommand{\oma}{{\om_{\ver}}}
\newcommand{\Hdiva}{{{\bH(\textup{div};\oma)}}}
\newcommand{\psia}{\psi_{\ver}}
\newcommand{\saa}{\bm{\sigma}_{\ver}}
\newcommand{\bsig}{{\boldsymbol \sigma}}
\newcommand{\br}{{\bm r}}
\newcommand{\bvh}{\bm{v}_{\calT}}
\newcommand{\bva}{\bm{v}_{\ver}}
\newcommand{\bvK}{\bm{v}_{K}}
\newcommand{\bv}{\bm{v}}
\newcommand{\bn}{\bm{n}}
\newcommand{\bp}{\bm{p}}
\newcommand{\bx}{\bm{x}}
\newcommand{\calA}{\mathcal{A}}
\newcommand{\calF}{\mathcal{F}}
\newcommand{\calT}{\mathcal{T}}
\newcommand{\calV}{\mathcal{V}}
\newcommand{\calP}{\mathcal{P}}
\newcommand{\calVint}{\mathcal{V}_{\Omega}}
\newcommand{\calVext}{\mathcal{V}_{\Gamma}}
\newcommand{\calVextDb}{\mathcal{V}_{\mathrm D}}
\newcommand{\calVextNb}{\mathcal{V}_{\mathrm N}}
\newcommand{\calFK}{\mathcal{F}_{\mathrm{K}}}
\renewcommand{\dim}{d}
\newcommand{\abs}[1]{\lvert#1\rvert}
\newcommand{\tends}{\rightarrow}
\newcommand{\norm}[1]{\lVert#1\rVert}
\newcommand{\Bnorm}[1]{\Bigg\lVert#1\Bigg\rVert}
\DeclareMathOperator{\Curl}{curl}
\DeclareMathOperator{\Div}{div}
\DeclareMathOperator*{\argmin}{arg\,min}
\newcommand{\pair}[2]{\langle#1,#2\rangle}
\newcommand{\mean}[1]{\lbrace\kern-0.85ex\lbrace#1\rbrace\kern-0.85ex\rbrace}
\newcommand{\jump}[1]{\llbracket#1\rrbracket}
\newcommand{\R}{\mathbb{R}}
\newcommand{\om}{\omega}
\newcommand{\VK}{\mathcal{V}_K}
\newcommand{\bLO}{\bm{L}^2(\Omega)}
\newcommand{\bLo}{\bm{L}^2(\om)}
\newcommand{\RTN}{\bm{RTN}_p(\calT)}
\newcommand{\RTNa}{\bm{RTN}_p(\Ta)}
\newcommand{\Hcurl}{\bm{H}(\Curl)}
\newcommand{\Hdiv}{\bm{H}(\Div)}
\newcommand{\Hsdiv}{\bm{H}^s(\Div)}
\newcommand{\HdivO}{\bm{H}(\Div,\Omega)}
\newcommand{\Hdivo}{\bm{H}(\Div,\om)}
\newcommand{\bsh}{\bm{\sigma}_{\calT}}
\newcommand{\btha}{\bm{\tau}_\ver}
\newcommand{\bth}{\bm{\tau}_{\calT}}
\newcommand{\wbth}{\widetilde{\bm{\tau}}_{\calT}}
\newcommand{\gha}{g_{\ver}}
\newcommand{\Pp}{\calP_p}
\newcommand{\Vha}{\bm{V}_{p}(\oma)}
\newcommand{\Pihp}{\Pi_{\calT}^{p}}
\newcommand{\Ihp}{\bm{I}_{\calT}^{p}}
\newcommand{\omf}{\omega_{F}}
\newcommand\eal{{\em et al.}}
\newcommand\ie{i.e.}
\newcommand\cf{cf.}
\newcommand\eg{e.g.}
\newcommand\eq{\coloneqq}
\newcommand\pt{\partial}
\newcommand{\GDa}{\Gamma_{\mathrm D}^\ver}
\newcommand{\LO}{h_\Omega}
\ifIMA\AtBeginDocument{
  \label{CorrectFirstPageLabel}
  
}
\begin{document}

\ifIMA
\title{Equivalence of local- and global-best approximations,\\ a simple stable local commuting projector,\\ and optimal $hp$ approximation estimates in $\Hdiv$}
\shorttitle{Equivalence of local- and global-best approximations in $\Hdiv$}
\else
\title{Equivalence of local- and global-best approximations,\\ a simple stable local commuting projector,\\ and optimal $hp$ approximation estimates in $\Hdiv$\thanks{This project has received funding from the European Research Council (ERC) under the European Union’s Horizon 2020 research and innovation program (grant agreement No 647134 GATIPOR).}}
\fi

\ifIMA

\author{%
{\sc Alexandre~Ern}\thanks{Email: alexandre.ern@enpc.fr}\\[2pt]
Universit\'e Paris-Est, CERMICS (ENPC), 77455 Marne-la-Vall\'ee, France\\
\&\\ Inria, 2 rue Simone Iff, 75589 Paris, France\\[6pt]
{\sc Thirupathi~Gudi}\thanks{Email: gudi@iisc.ac.in}\\[2pt]
Department of Mathematics, Indian Institute of Science Bangalore, India 560012\\[6pt]
{\sc Iain~Smears}\thanks{Email: i.smears@ucl.ac.uk}\\[2pt]
Department of Mathematics, University College London, London WC1E 6BT, UK\\[6pt]
{\sc and}\\[6pt]
{\sc Martin~Vohral\'ik}\thanks{Corresponding author. Email: martin.vohralik@inria.fr}\\[2pt]
Inria, 2 rue Simone Iff, 75589 Paris, France \\
\& \\
Universit\'e Paris-Est, CERMICS (ENPC), 77455 Marne-la-Vall\'ee, France}
\shortauthorlist{A. Ern \emph{et al.}}

\else

\author{Alexandre~Ern\footnotemark[2] \footnotemark[3] \and Thirupathi~Gudi\footnotemark[4] \and Iain~Smears\footnotemark[5] \and Martin~Vohral\'ik\footnotemark[3] \footnotemark[2]}

\renewcommand{\thefootnote}{\fnsymbol{footnote}}
\footnotetext[2]{Universit\'e Paris-Est, CERMICS (ENPC), 77455 Marne-la-Vall\'ee, France.}
\footnotetext[3]{Inria, 2 rue Simone Iff, 75589 Paris, France.\\ \indent Emails: \href{mailto:alexandre.ern@enpc.fr}{\texttt{alexandre.ern@enpc.fr}}, \href{mailto:martin.vohralik@inria.fr}{\texttt{martin.vohralik@inria.fr}}.}
\footnotetext[4]{Department of Mathematics, Indian Institute of Science Bangalore, India 560012. Email: \href{mailto:gudi@iisc.ac.in}{\texttt{gudi@iisc.ac.in}}.}
\footnotetext[5]{Department of Mathematics, University College London, London WC1E 6BT, UK. Email: \href{mailto:i.smears@ucl.ac.uk}{\texttt{i.smears@ucl.ac.uk}}.}

\fi


\date{\today}

\maketitle

\begin{abstract}
{Given an arbitrary function in $\Hdiv$, we show that the error attained by the global-best approximation by $\Hdiv$-conforming piecewise polynomial Raviart--Thomas--N\'ed\'elec elements under additional constraints on the divergence and normal flux on the boundary, is, up to a generic constant, equivalent to the sum of independent local-best approximation errors over individual mesh elements, without constraints on the divergence or normal fluxes.
The generic constant only depends on the shape-regularity of the underlying simplicial mesh, the space dimension, and the polynomial degree of the approximations.
The analysis also gives rise to a stable, local, commuting projector in $\Hdiv$, delivering an approximation error that is equivalent to the local-best approximation.
We next present a variant of the equivalence result, where robustness of the constant with respect to the polynomial degree is attained for unbalanced approximations.
These two results together further enable us to derive rates of convergence of global-best approximations that are fully optimal in both the mesh size $h$ and the polynomial degree $p$, for vector fields that only feature elementwise the minimal necessary Sobolev regularity.
We finally show how to apply our findings to derive optimal a priori $hp$-error estimates for mixed and least-squares finite element methods applied to a model diffusion problem.}
\ifIMA{best approximation; piecewise polynomial; localization; $\Hdiv$ Sobolev space; Raviart--Thomas--N\'ed\'elec space; minimal regularity; optimal error bound; commuting projector; mixed finite element method; least-squares method; a priori error estimate.}\fi
\end{abstract}

\ifIMA
\else

\paragraph*{Keywords.}
best approximation, piecewise polynomial, localization, $\Hdiv$ Sobolev space, Raviart--Thomas--N\'ed\'elec space, minimal regularity, optimal error bound, commuting projector, mixed finite element method, least-squares method, a priori error estimate.

\fi


\section{Introduction}\label{sec:Intro}

Interpolation operators that approximate a given function with weak gradient, curl, or divergence by a piecewise polynomial of degree $p$ are fundamental in numerical analysis. Typically, this has to be done over a computational domain $\Omega$ covered by a mesh $\calT$ with characteristic size $h$. Probably the most widespread are the canonical interpolation operators associated with the canonical degrees of freedom of the finite elements from the discrete de Rham sequence, which in particular include the N\'ed\'elec and Raviart--Thomas finite elements.
The advantage of these operators is that they are local (that is, defined independently on each element $K$ of the mesh $\calT$) and that they commute with the appropriate differential operators. They are also projectors, \ie, they leave the interpolated function invariant if it is already a piecewise polynomial, and they lead to optimal approximation error bounds with respect to the mesh size $h$.
However, the canonical interpolation operators have two main deficiencies. Firstly, these operators can act on a given function only if it possesses more regularity beyond the minimal $H^1$, $\Hdiv$, and $\Hcurl$ regularity. Secondly, they are not well-suited to derive approximation error bounds that are quasi-optimal in the polynomial degree $p$.

\subsection{Interpolation operators and $hp$-approximation}

The projection-based interpolation operators, see \ifIMA\else Demkowicz and Buffa~\fi\cite{Demk_Buf_q_opt_proj_int_05}, \ifIMA\else Demkowicz~\fi\cite{Demk_pol_ex_seq_int_06}, and the references therein, lead to {\em optimal approximation properties} in the {\em mesh size $h$} and {\em quasi-optimal approximation properties} in the {\em polynomial degree $p$} (up to logarithmic factors). They were derived under a conjecture of existence of commuting and polynomial-preserving extension operators from the boundary of the given element $K$ to its interior which was later established \ifIMA\else by Demkowicz~\eal\ \fi in~\cite{Demk_Gop_Sch_ext_I_09, Demk_Gop_Sch_ext_II_09, Demk_Gop_Sch_ext_III_12}; the approximation results are summarized in~\cite[Theorem~8.1]{Demk_Gop_Sch_ext_III_12}.
Thus, these operators essentially lift the second drawback of the canonical interpolation operators described above (up to logarithmic factors), while still sharing the same important properties, \ie, they are defined locally, they are projectors, and they commute with the appropriate differential operators. However, these operators again require more regularity beyond the minimal $H^1$, $\Hdiv$, and $\Hcurl$ regularity, so that the first drawback remains.

In the particular case of $\Hdiv$, which constitutes the focus of the present work, the normal component of the interpolate on each mesh face is fully dictated by the normal component of the interpolated function on that face, which requires $\Hsdiv$ regularity with $s > 0$, which is slightly more than $\Hdiv$ regularity.
Some further refinements can be found in \ifIMA\else Bespalov and Heuer~\fi\cite{Besp_Heue_H_div_hp_2D_11} and \ifIMA\else Ern and Guermond~\fi\cite{Ern_Guer_min_reg_nonconf_18}.
Recently, building on~\cite{Demk_Buf_q_opt_proj_int_05, Demk_pol_ex_seq_int_06}, a commuting projector that fully removes the second drawback above in that it has fully {\em optimal $p$-approximation properties} (does not feature the logarithmic factors) has been devised by \ifIMA\else Melenk and Rojik in~\fi\cite{Mel_Roj_com_p_interp_20}. To define the projector, though, higher regularity is needed, with in particular $\Hsdiv$, $s \geq 1$, in the case of interest here.

The issue of constructing (quasi-)interpolation projectors under the minimal regularities $H^1$, $\Hdiv$, and $\Hcurl$ has been addressed before, \cf, \eg, \ifIMA\else Cl\'ement~\fi\cite{Clement_appr_75}, \ifIMA\else Scott and Zhang~\fi\cite{Scott_Zh_int_nonsm_90}, and \ifIMA\else Bernardi and Girault~\fi\cite{Ber_Gir_loc_reg_98} in the $H^1$ case, \ifIMA\else Nochetto and Stamm~\fi\cite{Noch_Stamm_H_dv_19} in the $\Hdiv$ case, and \ifIMA\else Bernardi and Hecht~\fi\cite{Ber_Hecht_appr_Ned_07} in the $\Hcurl$ case; see also the references therein.
Stability and $h$-optimal approximation estimates in any $L^p$-norm, $1 \leq p \leq \infty$,  has recently been achieved by \ifIMA\else Ern and Guermond in~\fi\cite{Ern_Guer_quas_int_best_appr_17} in a unified setting for a wide range of finite elements encompassing the whole discrete de Rham sequence.
The arguments used in~\cite{Ern_Guer_quas_int_best_appr_17} are somewhat different from those in the previous references: a projection onto the fully discontinuous (broken) piecewise polynomial space is applied first, followed by an averaging operator to ensure the appropriate $H^1$, $\Hdiv$, or $\Hcurl$ trace continuity. Unfortunately, all of the quasi-interpolation projectors mentioned in this paragraph do not commute with the appropriate differential operators and, moreover, they are only shown to be optimal in $h$ but not in $p$.

\subsection{Stable local commuting projectors under minimal regularity}

Constructing projectors applicable under the {\em minimal regularities} $H^1$, $\Hdiv$, and $\Hcurl$ that would in addition be {\em commuting}, {\em stable}, and {\em locally defined} represents a long-standing effort. Stability, commutativity, and the projection property were obtained by \ifIMA\else Christiansen and Winther in~\fi\cite{Christ_Wint_sm_proj_08} by composing the canonical interpolation operators with mollification, following some earlier ideas in particular from \ifIMA\else Sch\"oberl~\fi\cite{Schob_com_quas_int_MFE_01, Schob_ML_dec_H_curl_05}, cf. also \ifIMA\else Ern and Guermond~\fi\cite{Ern_Guer_mol_de_Rham_16} for a shrinking technique avoiding the need of extensions outside of the domain and \ifIMA\else Licht~\fi\cite{Licht_sm_proj_19} for essential boundary conditions only prescribed on the part of the boundary of $\Omega$.
These operators are, however, not locally defined. This last remaining issue was finally remedied in~\cite{Falk_Winth_loc_coch_14}, where a patch-based construction resembling that of the Cl\'ement operator~\cite{Clement_appr_75} is introduced. However, no approximation properties are discussed, and stability is achieved only in the graph space of the appropriate differential operator, \eg, $\Hdiv$ but not in $\bL^2$ for the case of interest here.

\subsection{Equivalence of local-best and global-best approximations}

In a seemingly rather unconnected recent result, \ifIMA\else Veeser~\fi\cite{Veeser_approx_grads_16} showed that the error in the {\em best approximation} of a given scalar-valued function in $H^1$ by {\em continuous} piecewise polynomials is {\em equivalent} up to a generic constant to that by {\em discontinuous} piecewise polynomials. This result is termed equivalence of global- and local-best approximations.
A predecessor result in the lowest-order case $p=1$ and up to data oscillation can be easily deduced from \ifIMA\else Carstensen~\eal~\fi\cite[Theorem~2.1 and inequalities~(3.2), (3.5), and (3.6)]{Cars_Pet_Sched_comp_FEs_12}, see also the references therein; equivalences between approximations by different numerical methods are studied in~\cite{Cars_Pet_Sched_comp_FEs_12}. A similar result is also given in \ifIMA\else Aurada~\eal~\fi\cite[Proposition 3.1]{Aur_Fei_Kem_Pag_Praet_loc_glob_13}, and an improvement of the dependence of the equivalence constant on the polynomial degree in two space dimensions is developed in~\cite[Theorem~4]{Can_Noch_Stev_Ver_hp_AFEM_17}.
This equivalence result might be surprising at a first glance, since the local-best error is clearly smaller than the global-best one. The twist comes from the fact that the function to be approximated is continuous in the sense of traces because of its $H^1$-regularity, so one does not gain in approximating it by discontinuous piecewise polynomials.
For finite element discretizations of coercive problems, this result in particular allows one to obtain a priori error estimates without the passage through the Bramble--Hilbert lemma, see \ifIMA\else Gudi~\fi\cite{Gud_a_pr_med_10} or \ifIMA\else Carstensen and Schedensack~\fi\cite{Carst_Sched_medius_elast_14} for important examples of using of a posteriori tools in a priori error analysis.
Another important application is for approximation classes in the theory of a-posteriori-based convergence and optimality~\cite{Veeser_approx_grads_16}.


\subsection{Main results of the manuscript} \label{sec_main_res}

Our main results can be divided into three parts.

\paragraph{1) A simple stable local commuting projector defined under the minimal $\Hdiv$ regularity}

The starting point of our work involves the definition of a projector that maps functions from $\Hdivzero$ (see Section~\ref{sec:Setting} for precise definitions) into the $\Hdivzero$-conforming Raviart--Thomas--N\'ed\'elec space of order $p \geq 0$. This projector enjoys a commuting property with the divergence operator, is locally defined over patches of elements, and is stable in $\bL^2$ up to a $hp$ data oscillation term for the divergence.
Moreover, our projector has a very simple construction, with elementwise local-best approximations combined patch by patch to the final projector via the flux equilibration technique. The essential (no-flux) boundary condition on only a part of the computational domain is here taken into account without any difficulty.
By combining the local-best approximations in a stable manner, the projector achieves, on each element, an error equivalent to local-best errors over a patch of neighbouring elements. All these results are summarized in Definition~\ref{def:proj} and Theorem~\ref{thm:proj} below.

Our main tool for defining the projector is the {\em equilibrated flux reconstruction}. This allows us to transform locally (on patches of elements) a  discontinuous piecewise polynomial with a suitable patchwise divergence constraint into a $\Hdiv$-conforming piecewise polynomial with the expected elementwise divergence constraint.
This has been traditionally used in {\em a posteriori} error analysis of {\em primal} finite element methods derived from $H^1$-formulations, see \ifIMA\else Destuynder and M{\'e}tivet~\fi\cite{Dest_Met_expl_err_CFE_99}, \ifIMA\else Luce and Wohlmuth~\fi\cite{Luce_Wohl_local_a_post_fluxes_04}, \ifIMA\else Braess and Sch{\"o}berl~\fi\cite{Braess_Scho_a_post_edge_08}, \ifIMA\else Ern and Vohral\'ik~\fi\cite{Ern_Voh_adpt_IN_13, Ern_Voh_p_rob_15}, \ifIMA\else Becker~\eal~\fi\cite{Beck_Cap_Luce_flux_rec_16}, and the references therein.
We now employ it here in the context of {\em a priori} error analysis of {\em dual} approximations in $\Hdiv$. Variable polynomial degrees can be taken into account by proceeding as in, \eg, \cite{Dol_Ern_Voh_hp_16}. We avoid it here for the sake of clarity of exposition.

\paragraph{2) Equivalence of local- and global-best approximations in $\Hdiv$ under minimal regularity}

For an arbitrary function in $\Hdivzero$, we consider its global best-approximation error by $\HdivO$-conforming Raviart--Thomas--N\'ed\'elec (RTN) elements of order $p$, defined as the minimum error in a dimensionally consistent weighted $\Hdiv$-norm defined in~\eqref{eq:global_local_equiv_1} below, subject to constraints on the divergence and on the boundary.
In~Theorem~\ref{thm:loc_glob}, we show that the {\em global} best-approximation error is, up to a generic constant, {\em equivalent} to the {\em local}-best approximation errors defined by elementwise minimizations, {\em without any constraint} on the inter-element continuity of the normal trace or on the divergence.
This actually results from the properties of the above projector.
The generic constant entering the equivalence result only depends on the shape-regularity of the simplicial mesh $\calT$, the space dimension $d$, and the polynomial degree $p$.
This extends the results of~\cite{Aur_Fei_Kem_Pag_Praet_loc_glob_13, Can_Noch_Stev_Ver_hp_AFEM_17, Cars_Pet_Sched_comp_FEs_12, Veeser_approx_grads_16} to the $\Hdiv$ case, where we are importantly also able to remove the divergence constraint.

\paragraph{3) Optimal $hp$-approximation estimates in $\Hdiv$}

Our third main result is Theorem~\ref{thm:appr_est} where we derive $hp$-{\em approximation estimates}. These estimates feature the following four properties: i) they request {\em no global regularity} of the approximated function $\bv$ beyond $\Hdivzero$; ii) only the {\em minimal local (elementwise)} $\bH^{s}$-{\em regularity}, $s\geq 0$, is needed; iii) the convergence rates are {\em fully optimal} in both the {\em mesh-size} $h$ and the {\em polynomial degree} $p$, in particular featuring no logarithmic factor of the polynomial degree $p$; iv) {\em no higher-order norms of the divergence} of $\bv$ appear in the bound whenever $s \geq 1$.
This improves on~\cite{Demk_Buf_q_opt_proj_int_05, Demk_pol_ex_seq_int_06} in removing the suboptimality with respect to the polynomial degree, on~\cite{Demk_Buf_q_opt_proj_int_05, Demk_pol_ex_seq_int_06, Mel_Roj_com_p_interp_20} in reducing the regularity requirements, and on approximations using Cl\'ement-type operators in removing the need for regularity assumptions over the (overlapping) elemental patches while reducing it instead to (nonoverlapping) elements.
The proof of these fully optimal $hp$-approximation estimates relies on the elementwise local-best approximation errors of Theorem~\ref{thm:loc_glob} described in point 2) together with its unbalanced but polynomial-degree-robust variant that we develop in Proposition~\ref{prop:loc_glob_p+1}.

\subsection{Applications to mixed finite element and least-squares mixed finite element methods}

The above results can be immediately turned into fully optimal $hp$ {\em a priori error estimates} for two popular classes of numerical methods for second-order elliptic partial differential equations.
In mixed finite element methods, \cf\ the original contributions of \ifIMA\else Raviart and Thomas~\fi\cite{Ra_Tho_MFE_77} and \ifIMA\else N{\'e}d{\'e}lec~\fi\cite{Ned_mix_R_3_80}, or the textbook by \ifIMA\else Boffi~\eal~\fi\cite{Bof_Brez_For_MFEs_13}, the error $\norm{\bsig-\bsig_{\mathrm{M}}}$ between the exact flux $\bsig$ and its mixed approximation $\bsig_{\mathrm{M}}$ immediately takes the form of the $\bL^2$-norm term in the constrained global-best approximation error of Theorems~\ref{thm:loc_glob} and~\ref{thm:appr_est} here (\cf~Lemma~\ref{lem:a_pr_DM}), so the application of our results is immediate. For the family of least-squares mixed finite element methods, see \ifIMA\else Pehlivanov~\eal~\fi\cite{Peh_Car_Laz_LS_MFEs_94},
\ifIMA\else Cai and Ku~\fi\cite{Cai_Ku_LS_MFE_L2_10}, \ifIMA\else Ku~\fi\cite{Ku_LS_MFE_min_reg_13}, and the references therein, the application is a little less immediate, and for completeness we establish it in Lemmas~\ref{lem:a_pr_LSM} and~\ref{lem:a_pr_LSM_div}. These results allow us in particular to circumvent the typical use of interpolation or quasi-interpolation operators to obtain error estimates that hinge upon increased regularity assumptions. Note also that an immediate application of the commuting projector of Definition~\ref{def:proj} in the context of mixed finite elements is the construction of a {\em Fortin operator} under the {\em minimal $\Hdiv$ regularity}.

\subsection{Organization of the manuscript}

The rest of the manuscript is organized as follows. In Section~\ref{sec:Setting}, we introduce the setting and the main notation. In Section~\ref{sec:Meain-results-Statements}, we state our main results, namely Theorem~\ref{thm:proj} about the simple stable local commuting projector, Theorem~\ref{thm:loc_glob} stating the relation between the local- and global-best approximations, and Theorem~\ref{thm:appr_est} stating the optimal $hp$-approximation estimates.
We also show there that Theorem~\ref{thm:loc_glob} follows immediately from Theorem~\ref{thm:proj}.
We then respectively prove Theorems~\ref{thm:proj} and~\ref{thm:appr_est} in Sections~\ref{sec:proof_projector} and~\ref{sec:proof_hp}.
Finally, we present an application of our main results to the a priori error analysis of mixed finite element and least-squares mixed finite methods in Section~\ref{sec:MFEs_a_priori}. A result on polynomial-degree-robust equivalence between constrained and unconstrained best approximations on a simplex is presented in Appendix~\ref{app:constr}; it is of independent interest.

\section{Setting and notation}\label{sec:Setting}

\subsection{Domain $\Omega$, space $\Hdivzero$, and simplicial mesh $\calT$}

Let $\Omega\subset \R^d$ for $d\in \{2,3\}$ be an open, bounded, connected polygon or polyhedron with Lipschitz boundary~$\Gamma$. Let $\calT$ be a given conforming, simplicial, possibly locally refined mesh of $\Omega$, i.e. $\overline{\Omega}=\cup_{K \in \calT} K$, where any $K$ is a closed simplex and the intersection of two different simplices is either an empty set or their common vertex, edge, or face.
Let $\GD$ be a (possibly empty) closed subset of $\Gamma$, and let $\GN\coloneqq\Gamma\setminus\GD$ be its (relatively open) complement in $\Gamma$, with the assumption that $\calT$ matches $\GD$ and $\GN$ in the sense that every boundary face of the mesh $\calT$ is fully contained either in $\GD$ or in $\overline{\GN}$.
Let $\bLO\coloneqq L^2(\Omega;\R^\dim)$, and $\HdivO \coloneqq \{\bv \in \bLO, \, \nabla{\cdot}\bv \in L^2(\Omega)\}$.
Furthermore, we define the space $\Hdivzero \coloneqq \{\bv\in \HdivO,\, \bv{\cdot}\bn=0\text{ on } \Gamma_{\mathrm{N}}\}$, where $\bv{\cdot}\bn=0$ on $\Gamma_{\mathrm{N}}$ means that $\pair{\bv{\cdot}\bn}{\varphi}_{\Gamma}=0$ for all functions $\varphi\in H^1(\Omega)$ that have vanishing trace on $\GD$; here $\pair{\bv{\cdot}\bn}{\varphi}_{\Gamma}\coloneqq \int_{\Omega}\left[ \bv{\cdot} \nabla \varphi+(\nabla{\cdot}\bv) \varphi\right]$.
For an open subset $\om \subset \Omega$, let $\bLo \eq L^2(\om;\R^\dim)$ and $\Hdivo \eq \{\bv \in \bLo, \, \nabla{\cdot}\bv \in L^2(\om)\}$.
We also denote by $({\cdot},{\cdot})_{\om}$ and $\norm{\cdot}_{\om}$ the $L^2$-inner product and norm for scalar- or vector-valued functions on $\om$.
In the special case where $\om = \Omega$, we drop the subscript, i.e. $({\cdot},{\cdot}) \eq ({\cdot},{\cdot})_{\Omega}$ and $\norm{{\cdot}}\coloneqq\norm{{\cdot}}_{\Omega}$.
The diameter of $\om$ is denoted by $h_\om$, and its outward unit normal as $\bn_\om$.

\subsection{Elements, vertices, faces, and patches of elements} \label{sec:mesh_not}

For any mesh element $K \in \calT$, its diameter is denoted by $h_K$, and we set $h \coloneqq \max_{K\in\calT}h_K$.
Let $\calVint$ denote the set of interior vertices of $\calT$, i.e.\ the vertices contained in $\Omega$. Let $\calVext$ denote the set of vertices of $\calT$ on the boundary $\Gamma$, and set $\calV\coloneqq\calVint\cup\calVext$.
We divide $\calVext$ into two disjoint sets $\calVextDb$ and $\calVextNb$, where $\calVextDb$ contains all vertices in $\GD$ (recalling that $\GD$ is assumed to be closed) and $\calVextNb$ consists of all vertices in $\GN$.
For each vertex $\ver\in \calV$, define the patch $\Ta\coloneqq\{K\in\calT,\, \ver \text{ is a vertex of } K\}$ and the corresponding open subdomain $\oma \eq \{\cup_{K\in\Ta} K\}^\circ$.
The piecewise affine Lagrange finite element basis function associated with a vertex $\ver\in\calV$ is denoted by $\psia$.
Let $\calF$ denote the set of all $(d-1)$-dimensional faces of $\calT$. By convention, we consider faces to be closed sets.
For an element $K\in\calT$, we denote the set of all faces of $K$ by $\calFK$, and the set of all vertices of $K$ by $\calV_K$.
For each interior vertex $\ver \in \calVint$, we let $\calFaint$ denote the set of all faces that contain the vertex $\ver$ (and thus do not lie on the boundary of $\oma$). For boundary vertices $\ver \in \calVext$, let $\calFaint$ collect the faces that contain the vertex $\ver$ but do not lie on the Dirichlet boundary $\Gamma_D$. The mesh shape-regularity parameter is defined as $\kappa_\calT \eq \max_{K \in \calT} h_K / \varrho_K$, where $\varrho_K$ is the diameter of the largest ball inscribed in $K$.

\subsection{Piecewise polynomial and Raviart--Thomas--N\'ed\'elec spaces}

Let $p\geq 0$ be a nonnegative integer.
For $S\in \{K,F\}$, where $K\in\calT$ is an element and $F\in \calF$ is a face, we define $\Pp(S)$ as  the space of all polynomials of total degree at most $p$ on $S$.
If $\widetilde{\calT}$ denotes a subset of elements of $\calT$, $ \Pp(\widetilde{\calT}) \coloneqq\{ v_h\in L^2(\Omega),\, v_h|_K\in \Pp(K)\;\,\forall K\in\widetilde{\calT}\}$ is
the space of piecewise polynomials of degree at most $p$ over $\widetilde{\calT}$. Typically, $\widetilde{\calT}$ will be either the whole mesh $\calT$ or the patch $\Ta$ as defined above. We define the piecewise Raviart--Thomas--N\'ed\'elec space $\RTN \coloneqq \{ \bvh \in \bLO,\;\bvh|_K \in \RTNK\}$, where $\RTNK \coloneqq \Pp(K;\R^\dim) + \bx \Pp(K)$ and $\calP_p(K;\R^\dim)$ denotes the space of $\R^\dim$-valued functions defined on $K$ with each component being a polynomial of degree at most $p$ in $\Pp(K)$.
Note that with this choice of notation, functions in the space $\RTN$ do not necessarily belong to $\HdivO$; thus, $\RTN \cap \HdivO$ is a proper subspace of $\RTN$ which is classically characterized as those functions in $\RTN$ having a continuous normal component across interior mesh faces.
Moreover, $\RTN$ is a subspace of $\bC^1(\calT) \coloneqq\{\bv\in \bLO, \; \bv|_K \in \bC^1(K) \text{ for all } K\in \calT\}$, the space of piecewise (broken) first-order component-wise differentiable vector-valued fields over $\calT$.
To avoid confusion between piecewise smooth and globally smooth functions, we denote the elementwise gradient and the elementwise divergence by $\nabla_{\calT}$ and by $\nabla_{\calT}{\cdot}$, respectively.

\subsection{$L^2$-orthogonal projection and elementwise canonical interpolant}

For each polynomial degree $p\geq 0$, let $\Pihp \colon L^2(\Omega)\tends \Pp(\calT)$ denote the $L^2$-orthogonal projection of order $p$.
Similarly, let $\Pi_F^p$ denote the $L^2$-orthogonal projection of order $p$ on a face $F\in \calF$, which maps $L^2(F)$ to $\Pp(F)$.
Let $\Ihp \colon \bC^1(\calT) \tends \RTN$ be the {\em elementwise} canonical (Raviart--Thomas--N\'{e}d\'{e}lec) interpolant. The domain of $\Ihp$ can be taken (much) larger than $\bC^1(\calT)$, but not as large as piecewise $\Hdiv$ fields; the present choice is sufficient for our purposes. For any $\bv\in \bC^1(\calT)$, the interpolant $\Ihp \bv$ is defined separately on each element $K\in \calT$ by the conditions
\begin{equation}\label{eq:Ihp_def}
\begin{aligned}
( (\Ihp \bv)|_K {\cdot} \bn_{K} , q_K)_F &= (\bv|_{K} {\cdot} \bn_{K}, q_K)_F &&\forall q_K\in \Pp(F),\;\forall F\in \calF_K, \\
( \Ihp \bv, \br_K)_K & = (\bv, \br_K)_K &&\forall \br_K \in \calP_{p-1}(K;\R^\dim),
\end{aligned}
\end{equation}
where $\bv|_{K} {\cdot} \bn_{K}$ denotes the normal trace of $\bv|_K$, the restriction of $\bv$ to $K$. Note that~\eqref{eq:Ihp_def} implies that $((\Ihp \bv)|_K{\cdot} \bn_{K})|_F = \Pi_F^p ((\bv|_K{\cdot} \bn_{K})|_F)$ for all faces $F\subset \calFK$.
A useful property of the operator~$\Ihp$ is the commuting identity:
\begin{equation}\label{eq:Ihp_identity}
\begin{aligned}
\nabla_{\calT}{\cdot} (\Ihp\bv) = \Pihp(\nabla_{\calT}{\cdot}\bv) &&&\forall\bv\in\bC^1(\calT).
\end{aligned}
\end{equation}

\subsection{Spaces for patchwise equilibration}

In the spirit of \ifIMA\else Braess~\eal~\fi\cite{Brae_Pill_Sch_p_rob_09} and~\cite{Ern_Voh_adpt_IN_13, Ern_Voh_p_rob_15, Ern_Smears_Voh_H-1_lift_17}, we finally define the local mixed finite element spaces $\Vha$ by
\begin{equation}\label{eq:Vha_def}
\begin{aligned}
\Vha & \coloneqq
\begin{cases}
   \left\{\bva \in \RTNa\cap \Hdiva ,\, \bva{\cdot} \bn_{\oma} =0\text{ on }\p\oma \right\} & \quad \text{if }\ver\in\calVint\cup\calVextNb,\\
    \left\{\bva \in\RTNa\cap  \Hdiva ,\, \bva{\cdot} \bn_{\oma} =0\text{ on }\p\oma\setminus\GDa  \right\}& \quad \text{if }\ver\in\calVextDb,
\end{cases}
\end{aligned}
\end{equation}
where $\GDa$ contains those boundary faces from $\GD$ that share the vertex $\ver$.
In particular, we observe that when $\partial\oma \cap\GN\neq \emptyset$, then $\bva{\cdot}\bn =0$ on $\GN$ for any $\bva\in\Vha$.
As a result of the above definitions, it follows that the zero extension to all of $\Omega$ of any  $\bva\in \Vha$ belongs to $\RTN\cap\Hdivzero$.

\section{Main results}\label{sec:Meain-results-Statements}

This section collects our main results.

\subsection{A simple stable local commuting projector in $\Hdivzero$}\label{sec_proj}

Our first main result is a construction of a simple, locally defined, and stable commuting projector defined over the entire $\Hdivzero$ that leads to an approximation error that is equivalent to the local-best approximation error.

Recall the definition of the broken Raviart--Thomas--N\'{e}d\'{e}lec interpolant $\Ihp$ from~\eqref{eq:Ihp_def} and that of the piecewise polynomial patchwise $\Hdiva$-conforming spaces $\Vha$ from~\eqref{eq:Vha_def}. Recall also that zero extensions of elements of $\Vha$ belong to $\RTN\cap \Hdivzero$, and that $\psia$ is the piecewise affine Lagrange finite element basis function associated with the vertex $\ver$.

\begin{definition}[A simple locally-defined mapping from $\Hdivzero$ to $\RTN$ $\cap \Hdivzero$] \label{def:proj}
Let $\bv \in \Hdivzero$ be arbitrary. Let $\bth \in \RTN$ be defined elementwise by
\begin{equation}\label{eq_L2_proj_constraint}
    \bth|_K \eq \argmin_{\substack{\bv_K \in \RTNK \\ \nabla{\cdot}\bv_K = \Pihp(\nabla{\cdot}\bv)}} \norm{\bv-\bv_K}_K \qquad \forall K \in \calT.
\end{equation}
For each mesh vertex $\ver\in \calV$, let $\saa \in \Vha$ be defined by
\begin{equation}\label{eq:sha_def}
\saa \coloneqq \argmin_{\substack{\bva \in \Vha \\ \nabla{\cdot}\bva= \Pihp (\psia \nabla{\cdot} \bv +\nabla \psia {\cdot} \bth)}}
\norm{\bva-\Ihp(\psia\bth)}_{\oma}.
\end{equation}
Extending the functions $\saa$ from the patch domains $\oma$ to the rest of $\Omega$ by zero, we define $P^p_{\calT} (\bv) \in \RTN \cap \Hdivzero$ by
\begin{equation}\label{eq:bsh_def}
P^p_{\calT} (\bv) \eq \bsh \eq \sum_{\ver\in\calV}\saa.
\end{equation}
\end{definition}

The justification that the construction of $P^p_{\calT}(\bv)$ is well-defined is given in Section~\ref{sec:exist_Pp} below.
The first step~\eqref{eq_L2_proj_constraint} in Definition~\ref{def:proj} considers the elementwise $\bL^2$-norm local-best approximation that defines the {\em discontinuous piecewise RTN polynomial} $\bth$ closest to $\bv$ under the divergence constraint.
The second step in~\eqref{eq:sha_def} can be seen as {\em smoothing} $\bth$ over the patch subdomains $\oma$ to obtain an $\Hdiv$-conforming approximation $\saa$ over each vertex patch with a suitably prescribed divergence. These approximations $\saa$ are then summed into $P^p_{\calT}(\bv)$.
The overall procedure is motivated by {\em equilibrated flux reconstructions} coming from a posteriori error estimation~\cite{Dest_Met_expl_err_CFE_99, Braess_Scho_a_post_edge_08, Ern_Voh_adpt_IN_13}. Here we adapt those techniques to the purpose of a priori error analysis.

Our first main result, whose proof is postponed to Section~\ref{sec:proof_projector}, is the following.

\begin{theorem}[Commutativity, projection, approximation, and stability of $P^p_{\calT}$]\label{thm:proj} Let a mesh $\calT$ of $\Omega$ and a polynomial degree $p \geq 0$ be fixed. Then, the operator~$P^p_{\calT}$ from Definition~\ref{def:proj} maps $\Hdivzero$ to $\RTN\cap \Hdivzero$ and
\begin{align}
\nabla{\cdot}P^p_{\calT} (\bv)&=  \Pihp (\nabla{\cdot} \bv)  && \forall \bv \in \Hdivzero,\label{eq:commuting}\\
P^p_{\calT} (\bv) &= \bv  && \forall \bv \in \RTN \cap \Hdivzero. \label{eq:proj}
\end{align}
Thus $P^p_{\calT}$ is a projection from $\Hdivzero$ onto $\RTN\cap \Hdivzero$ that commutes with the divergence.
Furthermore, for any $\bv \in \Hdivzero$ and any $K\in\calT$, we have the approximation and stability bounds
\begin{align}
\norm{\bv-P^p_{\calT} (\bv)}^2_K + & \Big[\frac{h_K}{p+1}\norm{\nabla{\cdot}(\bv-P^p_{\calT} (\bv))}_K\Big]^2  \label{eq:proj_approx} \\
\leq {} & C \sum_{K^{'\!\!}\in\calT_K} \Bigg\{\min_{\bv_{K^{'\!\!}}\in \RTNKp}\norm{\bv - \bv_{K^{'\!\!}}}_{K^{'\!\!}}^2 + \bigg[\frac{h_{K^{'\!\!}}}{p+1}\norm{\nabla{\cdot}\bv - \Pihp(\nabla{\cdot}\bv)}_{K^{'\!\!}}\bigg]^2\Bigg\},\nonumber\\
\norm{P^p_{\calT} (\bv)}_K^2 \leq {} & C \sum_{K^{'\!\!} \in \calT_K} \Bigg\{\norm{\bv}_{K^{'\!\!}}^2 + \Big[\frac{h_{K^{'\!\!}}}{p+1} \norm{\nabla{\cdot} \bv - \Pihp (\nabla{\cdot} \bv)}_{K^{'\!\!}}\Big]^2\Bigg\}  ,\label{eq:stab} \\
\norm{P^p_{\calT} (\bv)}_K^2+h_\Omega^2\norm{\nabla{\cdot} P^p_{\calT} (\bv)}_K^2 \leq {} & C \sum_{K^{'\!\!} \in \calT_K}\Big\{\norm{\bv}_{K^{'\!\!}}^2+h_\Omega^2\norm{\nabla{\cdot} \bv}_{K^{'\!\!}}^2\Big\} , \label{eq:stab_div}
\end{align}
where $\calT_K\coloneqq \cup_{\ver\in \calV_K}\Ta$ are the neighboring elements of $K$, and recalling that $h_\Omega$ denotes the diameter of $\Omega$. The constant $C$ above only depends on the space dimension $\dim$, the shape-regularity parameter $\kappa_\calT$ of~$\calT$, and the polynomial degree $p$.
\end{theorem}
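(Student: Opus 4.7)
The plan is to verify each property of $P^p_{\calT}$ in turn, starting with well-definedness, then the algebraic properties (commutativity and projection) which are essentially bookkeeping, and finally the approximation and stability bounds, which form the technical core.

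\textbf{Well-definedness.} The elementwise problem \eqref{eq_L2_proj_constraint} is feasible thanks to the surjectivity of $\nabla\cdot \colon \RTNK \to \Pp(K)$. The patchwise problem \eqref{eq:sha_def} requires the prescribed divergence $g \eq \Pihp(\psia\nabla\cdot\bv + \nabla\psia\cdot\bth)$ to satisfy the Neumann compatibility condition on $\oma$ for interior and Neumann-boundary vertices. This is where the choice of $\bth$ in \eqref{eq_L2_proj_constraint} becomes crucial: since $\nabla\psia|_K$ is a constant vector in $\RTNK$ with vanishing divergence, the optimality conditions for \eqref{eq_L2_proj_constraint} yield $(\bv-\bth,\nabla\psia)_K=0$ on each $K \in \Ta$, hence $\int_{\oma}\nabla\psia\cdot(\bth-\bv)=0$. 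Combined with $\int_{\oma}\nabla\cdot(\psia\bv)=0$ (from $\psia=0$ on $\partial\oma$ for interior vertices, or from $\bv\cdot\bn=0$ on $\GN$ for Neumann-boundary vertices) and the fact that $1\in\Pp(\calT)$ so $\Pihp$ preserves integrals, this gives $\int_{\oma}g=0$ as required. For Dirichlet-boundary vertices the compatibility is automatic since the normal flux on $\GDa$ is unconstrained.

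\textbf{Commutativity and projection.} For \eqref{eq:commuting}, restrict to $K\in\calT$, sum over $\ver\in\calV_K$, and use the partition-of-unity identities $\sum_{\ver\in\calV_K}\psia\equiv 1$, $\sum_{\ver\in\calV_K}\nabla\psia\equiv 0$ on $K$, together with linearity of $\Pihp$. For \eqref{eq:proj}, if $\bv\in\RTN\cap\Hdivzero$, then $\bv$ itself is admissible in \eqref{eq_L2_proj_constraint} (with zero error), so $\bth=\bv$; then $\Ihp(\psia\bv)$ lies in $\Vha$ (its normal trace on $\partial\oma\setminus\GDa$ vanishes because either $\psia=0$ there or $\bv\cdot\bn=0$ on $\GN$) and possesses the required divergence via the commuting property \eqref{eq:Ihp_identity}, so it attains the zero minimum in \eqref{eq:sha_def} and $\saa=\Ihp(\psia\bv)$. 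Summing and invoking $\Ihp$ being a projection on $\RTN$ gives $\bsh=\Ihp(\bv)=\bv$.

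\textbf{Approximation estimate \eqref{eq:proj_approx}.} This is the main technical step. The divergence part is immediate from \eqref{eq:commuting}. For the $\bL^2$-norm part, decompose $\bv-\bsh=(\bv-\bth)+(\bth-\bsh)$. The first term is elementwise controlled by the local-best approximation by the definition of $\bth$. For the second, since $\bth\in\RTN$, applying $\Ihp$ to $\sum_\ver\psia\bth=\bth$ yields $\bth=\sum_\ver\Ihp(\psia\bth)$, hence
\[
\bth-\bsh=\sum_{\ver\in\calV}\bigl(\Ihp(\psia\bth)-\saa\bigr).
\]
On each $K$ only the $\ver\in\calV_K$ terms contribute. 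To estimate each patch contribution, invoke a $p$-robust patchwise stability result for constrained flux equilibration in the spirit of \cite{Brae_Pill_Sch_p_rob_09,Ern_Voh_p_rob_15}: the minimizer $\saa$ of \eqref{eq:sha_def} satisfies
\[
\|\saa-\Ihp(\psia\bth)\|_{\oma}\leq C\,\|\bw-\Ihp(\psia\bth)\|_{\oma}
\]
for any $\bH(\Div)$-conforming $\bw$ on $\oma$ having the prescribed divergence $g$ and boundary conditions. Choosing $\bw$ as a suitable correction of $\psia\bv$ (for which $\nabla\cdot(\psia\bv)$ matches $g$ only after projection, so a $p$-robust divergence correction using the unconstrained/constrained equivalence from Appendix~\ref{app:constr} is needed), then adding and subtracting $\psia\bth$, the right-hand side is bounded by $\|\bv-\bth\|_{\oma}$ (via $|\psia|\leq 1$) plus an $\Ihp$-interpolation error of the piecewise polynomial $\psia\bth$ and a patchwise divergence-oscillation term $\tfrac{h_K}{p+1}\|\nabla\cdot\bv-\Pihp(\nabla\cdot\bv)\|_{\oma}$. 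Summing over vertices and using the shape-regular finite overlap of patches yields \eqref{eq:proj_approx}. The principal obstacle is securing $p$-robustness in the patchwise stability and in the divergence correction of the continuous competitor; this is the reason Appendix~\ref{app:constr} is needed.

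\textbf{Stability bounds \eqref{eq:stab} and \eqref{eq:stab_div}.} The first follows from \eqref{eq:proj_approx} by the triangle inequality together with the trivial choice $\bv_{K'}=0$ in the elementwise local best approximations. For \eqref{eq:stab_div}, the $\bL^2$-norm part is precisely \eqref{eq:stab}, while the divergence part is immediate from \eqref{eq:commuting} and $L^2$-stability of $\Pihp$, which gives $\|\nabla\cdot P^p_{\calT}(\bv)\|_K=\|\Pihp(\nabla\cdot\bv)\|_K\leq\|\nabla\cdot\bv\|_K$.
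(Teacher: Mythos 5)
Your handling of well-definedness, commutativity, the projection property, and the stability bounds \eqref{eq:stab}--\eqref{eq:stab_div} is correct and matches the paper, except that your verification of \eqref{eq:proj} is a pleasant direct alternative: you show $\bth=\bv$ and $\saa=\Ihp(\psia\bv)$ and sum, whereas the paper deduces \eqref{eq:proj} from \eqref{eq:proj_approx} by noting that the local errors $\Eloc(\bv)$ vanish for $\bv\in\RTN\cap\Hdivzero$. Both are fine.

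The genuine gap is in the proof of \eqref{eq:proj_approx}. You invoke the patchwise equilibration stability in its \emph{primal} form, $\norm{\saa-\Ihp(\psia\bth)}_{\oma}\leq C\norm{\bw-\Ihp(\psia\bth)}_{\oma}$ for any conforming competitor $\bw$ with the prescribed divergence and normal-trace conditions, and then declare $\bw$ to be ``a suitable correction of $\psia\bv$.'' That correction is where all the work lies and it is not supplied: one must exhibit $\bm{\delta}\in\Hdiva$ with $\bm{\delta}{\cdot}\bn_{\oma}=0$ on $\partial\oma$ (resp.\ $\partial\oma\setminus\GDa$), with $\nabla{\cdot}\bm{\delta}=\Pihp(\psia\nabla{\cdot}\bv+\nabla\psia{\cdot}\bth)-\nabla{\cdot}(\psia\bv)$, and with $\norm{\bm{\delta}}_{\oma}\leq C\{\sum_{K\in\Ta}[\Eloc(\bv)]^2\}^{1/2}$. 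This is not what Appendix~\ref{app:constr} provides: Lemma~\ref{lem:local_const_equivalence} is a single-simplex statement about discrete RTN minimizers, not a patchwise continuous divergence lifting with homogeneous normal flux; moreover the mismatch contains $(\mathrm{id}-\Pihp)(\nabla\psia{\cdot}\bv)$, whose naive treatment produces best-approximation errors of $\bv$ by vector polynomials rather than by $\RTNK$, which are not controlled by $\Eloc(\bv)$. The paper sidesteps the competitor entirely by using the \emph{dual} form of the same stability result (Lemma~\ref{lem:main_stability_bound}), bounding $\norm{\saa-\Ihp(\psia\bth)}_{\oma}$ by $\sup_{\varphi}\{(\gha,\varphi)_{\oma}+(\Ihp(\psia\bth),\nabla\varphi)_{\oma}\}$ and integrating by parts; this isolates an elementwise divergence-oscillation term plus face terms $\Pi_F^p(\psia\jump{\bth}{\cdot}\bn_F)$, and the latter are handled by the bubble-function bound of Lemma~\ref{lem:faces} together with the weak-divergence identity $(\nabla{\cdot}\bv,w)_{\omf}+(\bv,\nabla w)_{\omf}=0$, which converts the jump of $\bth$ into $\bv-\bth$ on the two neighbouring elements precisely because $\bv$ has no normal jumps. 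This mechanism --- exploiting the $\Hdiv$-conformity of $\bv$ to control the normal jumps of the purely local field $\bth$ --- is the essential missing ingredient in your sketch; without it (or a completed competitor construction) \eqref{eq:proj_approx} is not established. One mitigating remark: since the constant in \eqref{eq:proj_approx} may depend on $p$, you do not actually need $p$-robustness here; it only becomes essential in Proposition~\ref{prop:loc_glob_p+1}.
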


Property~\eqref{eq:stab} readily implies that $P^p_{\calT}$ is globally $L^2$-stable up to $hp$ data oscillation of the divergence, since summing over the mesh elements leads to
\begin{align}\label{eq_stab}
\norm{P^p_{\calT} (\bv)}^2 \leq C \Bigg\{\norm{\bv}^2 +\sum_{K \in \calT} \Big[\frac{h_K}{p+1} \norm{\nabla{\cdot} \bv - \Pihp (\nabla{\cdot} \bv)}_K\Big]^2\Bigg\} \qquad\forall \bv \in \Hdivzero.
\end{align}
Similarly, from~\eqref{eq:stab_div}, we infer that $P^p_{\calT}$ is  $\Hdiv$-stable, since
\begin{align*}
\norm{P^p_{\calT} (\bv)}^2+h_\Omega^2\norm{\nabla{\cdot} P^p_{\calT} (\bv)}^2\leq C \big[\norm{\bv}^2+h_\Omega^2\norm{\nabla{\cdot} \bv}^2\big] \qquad\forall \bv \in \Hdivzero.
\end{align*}
The projector $P^p_{\calT}$ in Definition~\ref{def:proj} and Theorem~\ref{thm:proj} improves on~\cite{Christ_Wint_sm_proj_08} in that the construction is {\em local}, and on~\cite{Falk_Winth_loc_coch_14} in that it is {\em stable in $\bL^2$}, up to data oscillation, see~\eqref{eq_stab}, rather than only in $\Hdiv$. We note that, for the divergence term, \eqref{eq:stab} improves the bound~(5.2) of~\cite[Theorem~5.2]{Falk_Winth_loc_coch_14} since, in particular, we have $\norm{\nabla{\cdot} \bv - \Pihp (\nabla{\cdot} \bv)}_K$ in place of $\norm{\nabla{\cdot} \bv}_K$, whereas~\eqref{eq:stab_div} is similar to the combination of the bounds~(5.2) and~(5.3) of~\cite[Theorem~5.2]{Falk_Winth_loc_coch_14}. The projection operator $P^p_{\calT}$ defined here also satisfies the {\em commuting property} with the divergence operator~\eqref{eq:commuting}, in contrast to~\cite{Ern_Guer_quas_int_best_appr_17}.

\subsection{Equivalence of local- and global-best approximations in $\Hdivzero$} \label{sec:equiv}

For any function $\bv\in \Hdivzero$, we consider the {\em global-best approximation error} $\Eglob(\bm{v})$ defined as the best approximation, in a weighted norm, from $\RTN$ $\cap \Hdivzero$, subject to a constraint on the divergence:
\begin{equation}\label{eq:global_local_equiv_1}
[\Eglob(\bm{v})]^2 \coloneqq \min_{\substack{\bvh\in \RTN\cap \Hdivzero\\ \nabla{\cdot}\bvh=\Pihp(\nabla{\cdot}\bv)}} \norm{\bv-\bvh}_{\Omega}^2 +
\sum_{K\in\calT}\bigg[\frac{h_K}{p+1}\norm{\nabla{\cdot}\bv - \Pihp(\nabla{\cdot}\bv)}_K\bigg]^2.
\end{equation}
We further consider the {\em local-best approximation errors} defined on each element $K\in \calT$ by
\begin{equation}\label{eq:local_minimizers}
[\Eloc(\bm{v})]^2 \coloneqq  \min_{\bv_K\in \RTNK}\norm{\bv - \bv_K}_K^2 + \bigg[\frac{h_K}{p+1}\norm{\nabla{\cdot}\bv - \Pihp(\nabla{\cdot}\bv)}_K\bigg]^2.
\end{equation}
Note that the minimization in~\eqref{eq:local_minimizers} does {\em not involve} a constraint on the divergence nor on the normal component on $\GN$ (whenever relevant). Furthermore, since $\Pihp$ is the $L^2$-orthogonal projection onto the broken polynomial space $\Pp(\calT)$, we have $\norm{\nabla{\cdot}\bv - \Pihp(\nabla{\cdot}\bv)}_K = \min_{q \in \calP_p(K)}\norm{\nabla{\cdot}\bv - q}_K$.
Thus the local approximation errors $\Eloc(\bv)$ involve the local-best approximation errors in $\bL^2$ plus a weighted $L^2$ best approximation error of the divergence.

In a direct consequence of Theorem~\ref{thm:proj}, we now show that the global-best error $\Eglob(\bv)$ is in fact {\em equivalent} to the root-mean square sum of the local-best errors $\Eloc(\bv)$ over all elements of the mesh.

\begin{theorem}[Equivalence of local- and global-best approximations]\label{thm:loc_glob}
There exists a  constant~$C$ depending only on the space dimension $\dim$, the shape-regularity parameter $\kappa_\calT$ of $\calT$, and the polynomial degree $p\ge0$, such that, for any $\bv\in \Hdivzero$,
\begin{equation}\label{eq:global_local_equiv_2}
  \left[\Eglob(\bv)\right]^2 \leq  C \sum_{K\in\calT}\left[\Eloc(\bv)\right]^2 \leq C \left[\Eglob(\bv)\right]^2.
\end{equation}
\end{theorem}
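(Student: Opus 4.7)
The plan is to read off the theorem directly as a corollary of Theorem~\ref{thm:proj}; the heavy lifting (construction and analysis of the projector $P^p_{\calT}$) has already been done, and only two short arguments remain.

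The lower bound $\sum_{K\in\calT}[\Eloc(\bv)]^2 \leq C\,[\Eglob(\bv)]^2$ is the easy direction and holds with $C = 1$. Let $\bvh^\ast$ denote the minimizer in~\eqref{eq:global_local_equiv_1}; then $\bvh^\ast|_K \in \RTNK$ is an admissible candidate for the \emph{unconstrained} local minimization~\eqref{eq:local_minimizers}, so on each $K\in\calT$,
\begin{equation*}
\min_{\bv_K \in \RTNK} \norm{\bv - \bv_K}_K^2 \;\leq\; \norm{\bv - \bvh^\ast}_K^2.
\end{equation*}
Summing over $K$ and noting that the divergence oscillation terms $[\tfrac{h_K}{p+1}\norm{\nabla{\cdot}\bv - \Pihp(\nabla{\cdot}\bv)}_K]^2$ appear \emph{identically} in both $[\Eglob(\bv)]^2$ and $\sum_K[\Eloc(\bv)]^2$ yields the bound.

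The upper bound $[\Eglob(\bv)]^2 \leq C\sum_{K\in\calT}[\Eloc(\bv)]^2$ is where Theorem~\ref{thm:proj} does all the work. The plan is to test the constrained minimization in~\eqref{eq:global_local_equiv_1} with the projection $P^p_{\calT}(\bv)$ itself. By Theorem~\ref{thm:proj}, this function lies in $\RTN\cap\Hdivzero$ and satisfies the divergence constraint $\nabla{\cdot}P^p_{\calT}(\bv) = \Pihp(\nabla{\cdot}\bv)$ via~\eqref{eq:commuting}, hence it is admissible. Consequently,
\begin{equation*}
[\Eglob(\bv)]^2 \;\leq\; \norm{\bv - P^p_{\calT}(\bv)}^2 \;+\; \sum_{K\in\calT}\Big[\tfrac{h_K}{p+1}\norm{\nabla{\cdot}\bv - \Pihp(\nabla{\cdot}\bv)}_K\Big]^2.
\end{equation*}

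It then remains to invoke the elementwise approximation estimate~\eqref{eq:proj_approx}: applying it to each contribution $\norm{\bv - P^p_{\calT}(\bv)}_K^2$ replaces it by $C\sum_{K^{'\!\!}\in\calT_K}[\Eloct(\bv)]^2$. Summing over $K\in\calT$ and swapping the order of summation, every element $K^{'\!\!}$ appears on the right-hand side at most $\#\{K\in\calT : K^{'\!\!}\in\calT_K\}$ times, a multiplicity uniformly bounded in terms of $\kappa_\calT$ and $\dim$. The residual divergence oscillation term already has the form of $[\Eloc(\bv)]^2$, and combining these two contributions closes the estimate. There is no real obstacle here: the only quantitative input is the local approximation bound built into $P^p_{\calT}$, and finite overlap of the neighbour patches $\calT_K$ turns it into a global equivalence.
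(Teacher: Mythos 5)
Your proposal is correct and follows essentially the same route as the paper: the second inequality is immediate from the definitions (indeed with constant $1$), and the first is obtained by inserting $P^p_{\calT}(\bv)$ into the constrained minimization set (admissible by the commuting property~\eqref{eq:commuting}), applying the local approximation bound~\eqref{eq:proj_approx}, and using the finite overlap of the patches $\calT_K$ guaranteed by shape-regularity.
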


\begin{proof}
Consider an arbitrary function $\bv\in \Hdivzero$; then Theorem~\ref{thm:proj} shows that the projection $P^p_{\calT}(\bv)\in \RTN\cap \Hdivzero$ satisfies the constraints of the global minimization set in~\eqref{eq:global_local_equiv_1} due to its commutativing property \eqref{eq:commuting}. Therefore, the first inequality in~\eqref{eq:global_local_equiv_2} follows by picking the function $P^p_{\calT}(\bv)$ from the minimization set, summing the bound in the  local approximation property~\eqref{eq:proj_approx} over all mesh elements, and invoking the shape-regularity of the mesh which implies that the number of neighbors a mesh cell can have is uniformly bounded from above. Meanwhile, the second inequality in~\eqref{eq:global_local_equiv_2} follows straightforwardly from the definitions in~\eqref{eq:global_local_equiv_1} and~\eqref{eq:local_minimizers}.
\end{proof}

\begin{remark}[Necessity of the divergence error terms]
Although the scaled divergence terms $\frac{h_K}{p+1}\norm{\nabla{\cdot}\bv - \Pihp(\nabla{\cdot}\bv)}_K$ take an identical form in both $\Eglob(\bv)$ and $\Eloc(\bv)$, they cannot be removed from the local contributions $\Eloc(\bv)$. Otherwise, it would be possible to choose a sequence of functions $\bv$ in $\Hdivzero$ approaching a function $\bth \in \RTN$ but $\bth \notin \Hdivzero$ such that the middle term in~\eqref{eq:global_local_equiv_2} would tend to zero but $\Eglob(\bv)$ would remain uniformly bounded away from zero.
\end{remark}

\begin{remark}[Equivalence with constraint on the right-hand side]\label{rem:constr_rhs}
Theorem~\ref{thm:loc_glob} also straightforwardly implies that
\begin{align*}
\left[\Eglob(\bv)\right]^2 & \leq C \sum_{K\in \calT}\left\{ \min_{\substack{\bv_K\in \RTNK\\\nabla{\cdot} \bv_K=\Pihp(\nabla{\cdot}\bv)|_K}}
\norm{\bv-\bv_K}_K^2+\bigg[\frac{h_K}{p+1}\norm{\nabla{\cdot}\bv - \Pihp(\nabla{\cdot}\bv)}_K\bigg]^2\right\} \\
& \leq C \left[\Eglob(\bv)\right]^2
\end{align*}
with the same constant $C$, where the minimization problems in the middle term include a constraint on the divergence to mirror the divergence constraint in $\Eglob(\bv)$.
\end{remark}

\subsection{Optimal-order $hp$-approximation estimates in $\Hdivzero$}

We finally focus on functions with some additional elementwise regularity.
For any $s\geq 0$ and any mesh element $K\in\calT$, let $\bH^s(K)$ denote the space of vector fields in $\bL^2(K)$ with each component in $H^s(K)$.
Recall the definition~\eqref{eq:global_local_equiv_1} of $\Eglob(\bm{v})$.
Our third and last main result, whose proof is postponed to Section~\ref{sec:proof_hp}, delivers $hp$-{\em optimal} convergence rates for vector fields in $\Hdivzero$ with the {\em minimally} necessary additional elementwise {\em regularity}.

\begin{theorem}[$hp$-optimal approximation estimates under minimal regularity]\label{thm:appr_est}
Let $s\geq 0$ and let $\bv \in \Hdivzero$ be such that
 \[
\bv|_K \in \bH^{s}(K) \quad\forall K\in\calT.
 \]
Let the polynomial degree $p\geq 0$. Then there exists a constant $C$, depending only on the regularity exponent $s$, the space dimension $\dim$, and the shape-regularity parameter $\kappa_\calT$ of $\calT$, such that
\begin{equation}\label{eq:est_p+1}
[\Eglob(\bv)]^2 \leq C  \bigg\{
\sum_{K\in\calT}\Big[\frac{h_K^{\min(s,p+1)}}{(p+1)^{s}}\norm{\bv}_{\bH^{s}(K)}\Big]^2 + \delta_{s<1}\Big[\frac{h_K}{p+1} \norm{\nabla{\cdot}\bv}_K\Big]^2 \bigg\},
\end{equation}
where $\delta_{s<1}\coloneqq1$ if $s<1$ and $\delta_{s<1}\coloneqq0$ if $s\ge1$.
\end{theorem}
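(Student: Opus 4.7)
The plan is to reduce the global best-approximation error to a sum of elementwise local-best errors via the equivalence theorems of Section~\ref{sec:equiv}, and then apply classical $hp$-optimal polynomial approximation on the shape-regular reference simplex.

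First, I would bound $[\Eglob(\bv)]^2$ by a sum of elementwise local-best contributions of the form appearing in~\eqref{eq:local_minimizers}, up to the unavoidable polynomial-degree shift. Because the constant in Theorem~\ref{thm:loc_glob} depends on $p$, a naive application would spoil the $p$-robustness of the constant announced in~\eqref{eq:est_p+1}. Instead, the correct tool here is the unbalanced polynomial-degree-robust variant of Proposition~\ref{prop:loc_glob_p+1}, which is designed precisely so that the constant is independent of $p$, at the price of a one-degree shift between the local and global polynomial approximation spaces. This step reduces the problem to purely local estimates on each $K\in\calT$.

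Second, on each element $K$, the local-best error decomposes into (i) the $\bL^2$-best approximation by $\RTNK$ and (ii) a scaled $L^2$-best approximation of $\nabla{\cdot}\bv$ by $\Pp(K)$. For (i), since $\Pp(K;\R^{\dim}) \subset \RTNK$, it suffices to apply classical $hp$-polynomial approximation estimates for $\bH^s(K)$-functions on a shape-regular simplex; these provide the rate $h_K^{\min(s,p+1)}/(p+1)^{s}\,\norm{\bv}_{\bH^{s}(K)}$ with a constant depending only on $s$, $\dim$, and $\kappa_\calT$, and crucially \emph{without} any logarithmic factor in $p$. Such estimates only require $s\geq 0$, which matches the minimal regularity hypothesis of the theorem.

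Third, for the divergence contribution (ii), I would split according to whether $s<1$ or $s\geq 1$. If $s<1$, the elementwise divergence carries no further regularity, so I simply bound $\norm{\nabla{\cdot}\bv - \Pihp(\nabla{\cdot}\bv)}_K$ by $\norm{\nabla{\cdot}\bv}_K$, producing the $\delta_{s<1}$ term in~\eqref{eq:est_p+1}. If $s\geq 1$, then $\nabla{\cdot}\bv|_K \in H^{s-1}(K)$ with $\norm{\nabla{\cdot}\bv}_{H^{s-1}(K)} \leq \sqrt{\dim}\,\norm{\bv}_{\bH^{s}(K)}$, and applying the scalar $hp$-polynomial approximation estimate in $H^{s-1}(K)$, followed by multiplication by $h_K/(p+1)$, yields a contribution of the form $h_K^{1+\min(s-1,p+1)}/(p+1)^{s}\,\norm{\bv}_{\bH^{s}(K)}$. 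A direct comparison of exponents shows that this is dominated by the first term on the right-hand side of~\eqref{eq:est_p+1}, and can therefore be absorbed, leaving no explicit divergence contribution when $s\geq 1$.

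The main obstacle will be securing the $p$-robustness of the overall constant: Theorem~\ref{thm:loc_glob} by itself is not enough, because its equivalence constant depends on $p$, and blindly applying it would contaminate the final rate with an additional $p$-factor. The correct use of the unbalanced variant in Proposition~\ref{prop:loc_glob_p+1}—together with the careful bookkeeping needed to align the one-degree shift in polynomial spaces with the corresponding $hp$-approximation exponent—is the essential technical point of the argument. A secondary, more standard difficulty is invoking the $hp$-approximation estimate on a simplex in a form that is simultaneously sharp in $h$ and in $p$ and that works under the minimal regularity $\bH^s$ with $s\geq 0$, free of logarithmic factors.
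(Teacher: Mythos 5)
There is a genuine gap: your plan to route the entire proof through Proposition~\ref{prop:loc_glob_p+1} alone does not deliver the claimed $h$-rate in the regime of low polynomial degree, and it does not cover $p=0$ at all. Proposition~\ref{prop:loc_glob_p+1} requires $p\geq 1$, and it bounds $\Eglob(\bv)$ by local-best errors at degree $p-1$; the corresponding $hp$-estimate on each simplex then yields the exponent $h_K^{\min(s,p)}$, not $h_K^{\min(s,p+1)}$. Whenever $p+1\leq s$ these two exponents differ ($\min(s,p)=p$ versus $\min(s,p+1)=p+1$), so the one-degree shift costs a full power of $h_K$ and the resulting bound is strictly weaker than~\eqref{eq:est_p+1}. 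Your remark about ``careful bookkeeping to align the one-degree shift with the $hp$-approximation exponent'' is precisely where the argument breaks: in the low-degree regime no bookkeeping can recover the lost power, because the local space $\bm{RTN}_{p-1}(K)$ genuinely cannot approximate at order $h_K^{p+1}$.

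The missing idea is a case split on $p$ versus $s$. For $p>s$ (hence $p\geq1$) one has $\min(s,p)=\min(s,p+1)=s$, so the degree shift is harmless and Proposition~\ref{prop:loc_glob_p+1} gives the $p$-robust constant you need; this part of your argument is correct. For $p\leq s$ one must instead use Theorem~\ref{thm:loc_glob} at degree $p$ itself, accepting its $p$-dependent constant; the point is that there are only finitely many integers $p$ with $0\leq p\leq s$, so taking the maximum of these constants produces a constant depending only on $s$, $\dim$, and $\kappa_\calT$. This also disposes of the $p=0$ case. Your treatment of the elementwise $hp$-approximation and of the divergence term (the trivial bound for $s<1$, and the absorption via the $H^{s-1}$-regularity of $\nabla{\cdot}\bv$ for $s\geq1$) matches the paper and is fine.
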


\section{Proof of Theorem~\ref{thm:proj} (commutativity, projection, approximation, and stability of $P^p_{\calT}$)}\label{sec:proof_projector}

The proof of Theorem~\ref{thm:proj} is split into several parts.
First, in Section~\ref{sec:exist_Pp}, we analyse essential properties of the construction of the mapping $P^p_{\calT}$ from Definition~\ref{def:proj}. We next establish the statement~\eqref{eq:commuting} from Theorem~\ref{thm:proj} in Section~\ref{sec:commut}, showing that the operator $P^p_{\calT}$ commutes with the divergence.
Then, in Section~\ref{sec:proof_approx_property}, we prove the statement~\eqref{eq:proj_approx}  from Theorem~\ref{thm:proj} on the approximation properties of $P^p_{\calT}$. This is the most technical part of the proof.
Finally, in Section~\ref{sec:proof_proj_concl}, we conclude by proving the remaining three statements~\eqref{eq:proj}, \eqref{eq:stab}, and \eqref{eq:stab_div} (the projection property, $\bL^2$ stability, and $\Hdiv$ stability).

\subsection{Justification of the construction of $P^p_{\calT}$}\label{sec:exist_Pp}

We start by showing that the operator $P^p_{\calT}$ of Definition~\ref{def:proj} is well-defined on $\Hdivzero$. Recall the notation from Section~\ref{sec:mesh_not}.

\begin{lemma}[Discrete weak divergence of $\bm{L}^2$-projection]\label{lem:rtn_orthogonality}
For any function $\bv\in\Hdivzero$, let $\bth$ be defined elementwise in~\eqref{eq_L2_proj_constraint}. Then
 \begin{equation}\label{eq:rtn_orthogonality}
  \begin{aligned}
  (\nabla{\cdot}\bv, \psia)_{\oma} + (\bth,\nabla \psia)_{\oma} = 0 &&&\forall\ver\in\calVint\cup\calVextNb.
  \end{aligned}
  \end{equation}
\end{lemma}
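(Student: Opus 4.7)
The plan is to combine two ingredients: an integration by parts formula on the vertex patch $\oma$ and the elementwise Euler--Lagrange (optimality) condition associated with the constrained minimization~\eqref{eq_L2_proj_constraint} that defines $\bth$.

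First, I would establish that for any $\ver\in\calVint\cup\calVextNb$,
\begin{equation*}
(\nabla{\cdot}\bv,\psia)_{\oma} + (\bv,\nabla\psia)_{\oma} = 0.
\end{equation*}
To this end, I extend $\psia$ by zero outside $\oma$; since $\psia$ already vanishes on $\partial\oma\cap\Omega$, the extension lies in $H^1(\Omega)$. For interior vertices $\ver\in\calVint$, the extension is in fact in $H^1_0(\Omega)$, and the identity follows directly from the definition of the weak divergence applied on $\Omega$. For boundary Neumann vertices $\ver\in\calVextNb$, I would argue that the extended $\psia$ has vanishing trace on $\GD$: any face $F\in\calF$ with $F\subset\GD$ cannot contain $\ver$, since otherwise $\ver$ would lie in the closed set $\GD$, contradicting $\ver\in\calVextNb$; consequently $\psia$ vanishes at every vertex of such $F$, and being affine on $F$, it is identically zero there. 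Because $\calT$ matches $\GD$ and $\GN$, every boundary face in $\GD$ is of this form, so the $\GD$-trace of $\psia$ indeed vanishes. Applying the definition of $\bv{\cdot}\bn=0$ on $\GN$ from Section~\ref{sec:Setting} to the test function $\psia$, the global boundary pairing vanishes, and since $\psia$ is supported in $\oma$ the displayed identity follows.

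Next, I would invoke the elementwise Euler--Lagrange condition for~\eqref{eq_L2_proj_constraint}. The minimizer $\bth|_K$ is characterized by
\begin{equation*}
(\bv-\bth,\bw_K)_K = 0 \qquad\forall\, \bw_K\in\RTNK\;\text{with}\;\nabla{\cdot}\bw_K=0.
\end{equation*}
Since $\psia$ is piecewise affine on $\calT$, $\nabla\psia|_K$ is a constant vector on each $K\in\Ta$, hence a divergence-free element of $\RTNK$ (for any $p\geq 0$). Therefore $(\bv-\bth,\nabla\psia)_K=0$ for every $K\in\Ta$, and summing over the patch yields $(\bv,\nabla\psia)_{\oma} = (\bth,\nabla\psia)_{\oma}$. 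Substituting this into the integration-by-parts identity above gives~\eqref{eq:rtn_orthogonality}.

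The argument is largely mechanical, the only step requiring a little care being the verification that the extended $\psia$ has vanishing $\GD$-trace when $\ver\in\calVextNb$, which relies on $\GD$ being closed and on $\calT$ matching $\GD$ and $\GN$.
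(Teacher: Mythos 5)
Your proposal is correct and follows essentially the same route as the paper: establish $(\nabla{\cdot}\bv,\psia)_{\oma}+(\bv,\nabla\psia)_{\oma}=0$ from $\psia\in H^1_{\GD}(\Omega)$ (the paper attributes this to the conformity of $\calT$ with the boundary partition, which you simply spell out in more detail), and then use the Euler--Lagrange conditions of~\eqref{eq_L2_proj_constraint} together with the fact that $\nabla\psia|_K$ is a divergence-free constant in $\RTNK$ to replace $\bv$ by $\bth$ elementwise.
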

\begin{proof}
First, observe that for any vertex $\ver\in\calVint\cup\calVextNb$, the hat function $\psia$ belongs to $H^1_{\GD}(\Omega)$ owing to the conformity of $\calT$ with respect to the Dirichlet and Neumann boundary sets. Therefore, $(\nabla{\cdot}\bv, \psia)_{\oma} + (\bv,\nabla \psia)_{\oma}=0$, where we use the fact that $\oma$ is the support of $\psia$. Since $\nabla \psia$ is a constant vector on each element $K$, the Euler--Lagrange equations for~\eqref{eq_L2_proj_constraint} imply that
\begin{equation}\label{eq:orth_K}
  (\bth,\nabla\psia)_K=(\bv,\nabla\psia)_K \qquad \forall K \in \Ta.
\end{equation}
Consequently, $(\bth,\nabla\psia)_{\oma}=(\bv,\nabla\psia)_{\oma}$, and~\eqref{eq:rtn_orthogonality} follows.
\end{proof}

We now show that the local minimization problems~\eqref{eq:sha_def} give well-defined local contributions $\saa$.

\begin{lemma}[Existence and uniqueness of local problems]\label{lem:ex_un}
For each vertex $\ver\in \calV$, there exists a unique $\saa\in \Vha$ satisfying~\eqref{eq:sha_def}.
\end{lemma}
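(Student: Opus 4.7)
The plan is to view \eqref{eq:sha_def} as the minimization of a strictly convex quadratic functional on an affine subspace of the finite-dimensional Hilbert space $\Vha \subset \bL^2(\oma;\Real^d)$, so that existence and uniqueness reduce to checking that the constraint set is non-empty. Recall that the functional $\bva \mapsto \tfrac12\norm{\bva - \Ihp(\psia\bth)}_{\oma}^2$ is continuous, coercive, and strictly convex on $\Vha$, and that the constraint set
\[
\calA_{\ver} \eq \bigl\{\bva \in \Vha : \nabla{\cdot}\bva = \Pihp(\psia\nabla{\cdot}\bv + \nabla\psia{\cdot}\bth)\bigr\}
\]
is an affine subspace (a translate of the kernel of the divergence in $\Vha$) of the finite-dimensional space $\Vha$, hence closed. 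Therefore, once $\calA_{\ver} \neq \emptyset$, the unique minimizer $\saa$ exists as the orthogonal projection of $\Ihp(\psia\bth)$ onto $\calA_{\ver}$.

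The main obstacle is thus to establish non-emptiness of $\calA_{\ver}$, which amounts to verifying that the divergence operator, restricted appropriately, is surjective onto the relevant polynomial space. The argument will be split according to whether $\ver$ is a Dirichlet boundary vertex or not. For $\ver \in \calVextDb$, the elements of $\Vha$ are allowed to have arbitrary normal traces on $\GDa$, so no compatibility condition on the right-hand side $\Pihp(\psia\nabla{\cdot}\bv + \nabla\psia{\cdot}\bth) \in \Pp(\Ta)$ is required. For $\ver \in \calVint \cup \calVextNb$, elements of $\Vha$ satisfy $\bva{\cdot}\bn_{\oma} = 0$ on the whole of $\partial\oma$, so by the divergence theorem the constraint is feasible only if the right-hand side has zero mean over $\oma$. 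This is exactly where Lemma~\ref{lem:rtn_orthogonality} enters: using that $\int_K\Pihp f = \int_K f$ for every $K \in \Ta$, we compute
\[
\int_{\oma}\Pihp\bigl(\psia\nabla{\cdot}\bv + \nabla\psia{\cdot}\bth\bigr) = (\nabla{\cdot}\bv,\psia)_{\oma} + (\bth,\nabla\psia)_{\oma} = 0,
\]
so the required compatibility holds.

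It remains to argue that, subject to this compatibility condition (when applicable), the map $\nabla{\cdot}\colon \Vha \to \Pp(\Ta)$ (respectively into the zero-mean subspace of $\Pp(\Ta)$ for $\ver \in \calVint \cup \calVextNb$) is surjective. This is a classical property of Raviart--Thomas--N\'ed\'elec spaces on a simplicial patch with mixed boundary conditions, and can be obtained, for example, by solving an auxiliary mixed finite element Poisson-type problem with suitable essential boundary conditions on $\partial\oma$ (natural on $\GDa$, essential elsewhere) and invoking a standard discrete inf--sup condition; it can also be seen directly from the surjectivity of the divergence on a single element together with a patchwise assembly argument. Combining surjectivity with the compatibility established via Lemma~\ref{lem:rtn_orthogonality} shows $\calA_{\ver} \neq \emptyset$, and the projection argument above then yields existence and uniqueness of $\saa$.
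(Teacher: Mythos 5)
Your proposal is correct and follows essentially the same route as the paper: the paper likewise reduces the problem to the well-posedness of a patchwise mixed finite element problem, distinguishes Dirichlet vertices (where no compatibility condition is needed because the normal constraint is not imposed on all of $\p\oma$) from interior and Neumann vertices, and verifies the zero-mean compatibility condition via Lemma~\ref{lem:rtn_orthogonality} exactly as you do. Your additional remarks on strict convexity and on the surjectivity of the divergence from $\Vha$ onto the (compatible subspace of) $\Pp(\Ta)$ simply make explicit what the paper subsumes under the well-posedness of the mixed problem.
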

\begin{proof}
The minimization problem~\eqref{eq:sha_def} is equivalent to a mixed finite element problem in the patch subdomain $\oma$.
For Dirichlet boundary vertices $\ver \in \calVextDb$, this problem is well-posed with a unique minimizer since the space $\Vha$ of~\eqref{eq:Vha_def} does not impose the normal constraint everywhere on $\partial \oma$. For interior and Neumann vertices $\ver\in\calVint\cup \calVextNb$, the source term in the divergence constraint satisfies the compatibility condition
\[
    (\Pihp (\psia \nabla{\cdot} \bv + \nabla \psia {\cdot} \bth), 1)_\oma = (\nabla{\cdot}\bv, \psia)_{\oma} + (\bth,\nabla \psia)_{\oma} = 0,
\]
where the second equality follows from~Lemma~\ref{lem:rtn_orthogonality}. Therefore, $\saa$ is also well-defined for interior and Neumann vertices $\ver\in\calVint\cup \calVextNb$.
\end{proof}

It follows from Lemma~\ref{lem:ex_un} that $P^p_{\calT} (\bv) \in \RTN\cap\Hdivzero$ is well-defined for every $\bv\in \Hdivzero$.

\subsection{Proof of the commuting property~\eqref{eq:commuting}} \label{sec:commut}

We are now ready to establish:

\begin{lemma}[Commuting property~\eqref{eq:commuting} from Theorem~\ref{thm:proj}]\label{lem:commuting}
 $P^p_{\calT}$ satisfies~\eqref{eq:commuting}.
\end{lemma}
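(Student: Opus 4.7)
The plan is to exploit the linearity of the divergence and of $\Pihp$, together with the partition-of-unity property of the hat functions $\psia$, to reduce the computation to a direct telescoping identity on each element.

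First I would fix an arbitrary element $K \in \calT$ and note that $P^p_{\calT}(\bv)|_K = \sum_{\ver \in \calV_K} \saa|_K$, because for any vertex $\ver \notin \calV_K$ the support of $\saa$ is contained in $\overline{\oma}$, which does not contain $K$. Before commuting the divergence with this sum, I need to verify that $P^p_{\calT}(\bv)$ genuinely lies in $\Hdivzero$, so that the divergence is well-defined globally; this follows from Lemma~\ref{lem:ex_un} together with the normal-trace condition built into the definition~\eqref{eq:Vha_def} of $\Vha$, which ensures that the zero extension of each $\saa$ off $\oma$ preserves normal continuity across faces and the Neumann boundary condition.

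Next I would take the divergence elementwise. By the definition of $\saa$ as an element of $\Vha$ satisfying the divergence constraint in~\eqref{eq:sha_def}, we have, on $K$,
\begin{equation*}
\nabla{\cdot} P^p_{\calT}(\bv)|_K = \sum_{\ver \in \calV_K} \nabla{\cdot}\saa|_K = \sum_{\ver \in \calV_K} \Pihp\bigl(\psia \nabla{\cdot}\bv + \nabla\psia{\cdot}\bth\bigr)\big|_K.
\end{equation*}
By linearity of $\Pihp$, the right-hand side equals $\Pihp\bigl(\sum_{\ver \in \calV_K}\psia\,\nabla{\cdot}\bv + \sum_{\ver \in \calV_K}\nabla\psia{\cdot}\bth\bigr)|_K$.

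The final step uses the classical partition-of-unity identities on $K$: since every hat function attached to a vertex of $K$ restricts to the standard barycentric coordinate on $K$, we have $\sum_{\ver \in \calV_K}\psia \equiv 1$ and consequently $\sum_{\ver \in \calV_K}\nabla\psia \equiv \mathbf{0}$ on $K$. Substituting these identities collapses the expression to $\Pihp(\nabla{\cdot}\bv)|_K$, which is exactly~\eqref{eq:commuting}.

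I do not anticipate any real obstacle here; the key enabling fact, namely that each local problem~\eqref{eq:sha_def} actually has a solution with the prescribed divergence, has already been settled by Lemmas~\ref{lem:rtn_orthogonality} and~\ref{lem:ex_un}. The only point to be careful about is confirming that all boundary conditions assemble correctly when summing the $\saa$'s into a globally $\Hdivzero$-conforming field, but this is immediate from~\eqref{eq:Vha_def}.
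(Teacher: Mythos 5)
Your proof is correct and follows essentially the same route as the paper: the paper likewise sums the divergence constraints from~\eqref{eq:sha_def} over the vertices and invokes the partition of unity $\sum_{\ver\in\calV}\psia=1$, $\sum_{\ver\in\calV}\nabla\psia=\bm{0}$, together with the linearity of $\Pihp$, to collapse the sum to $\Pihp(\nabla{\cdot}\bv)$ (see~\eqref{eq:equilibrated_proof}). Your elementwise localization to $\calV_K$ and the explicit check that the zero extensions assemble into an $\Hdivzero$-conforming field are just slightly more detailed renditions of facts the paper establishes in Section~\ref{sec:exist_Pp} and the remarks following~\eqref{eq:Vha_def}.
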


\begin{proof}
Since the functions $\{\psia\}_{\ver\in\calV}$ form a partition of unity over $\Omega$, \ie, $\sum_{\ver\in\calV}\psia =1$, and consequently $\sum_{\ver\in\calV}\nabla\psia =\bm{0}$, we find that
\begin{equation}\label{eq:equilibrated_proof}
\nabla{\cdot}P^p_{\calT} (\bv) = \sum_{\ver\in\calV}\nabla{\cdot}\saa = \sum_{\ver\in\calV}\big\{\Pihp (\psia \nabla{\cdot} \bv +\nabla \psia {\cdot} \bth)\big\} =\Pihp(\nabla{\cdot}\bv).\qedhere
\end{equation}
\end{proof}

\subsection{Proof of the approximation property~\eqref{eq:proj_approx}}\label{sec:proof_approx_property}

Let us start with two useful technical results.
For a given vertex $\ver \in \calV$, let the space~$H^1_*(\oma)$ be defined
by
\begin{equation} \label{eq:H1s}
H^1_*(\oma) \coloneqq \begin{cases}
 \{\varphi\in H^1(\oma),\quad (\varphi,1)_{\oma} = 0\} &\text{if }\ver \in \calVint \cup \calVextNb , \\
 \{\varphi\in H^1(\oma),\quad \varphi|_{\p \oma\cap \GDa} =0\} &\text{if }\ver\in\calVextDb,
\end{cases}
\end{equation}
where we recall that $\GDa$ contains those boundary faces from $\GD$ that share the vertex $\ver$.
Recall also the discrete spaces $\Vha$ defined in~\eqref{eq:Vha_def}. The following result has been shown in \ifIMA\else Braess~\eal~\fi\cite[Theorem~7]{Brae_Pill_Sch_p_rob_09} in two space dimensions and~\cite[Corollaries~3.3, 3.6, and~3.8]{Ern_Voh_p_rob_3D_20} in three space dimensions.

\begin{lemma}[Stability of patchwise flux equilibration] \label{lem:main_stability_bound}
Let a vertex $\ver\in\calV$ be fixed, and let $\gha\in \Pp(\Ta)$ and $\btha \in \RTNa$ be given discontinuous piecewise polynomials with the condition $(\gha,1)_{\oma} =0$ if $\ver\in\calVint\cup \calVextNb$.
Then, there exists a constant $C$, depending only on the space dimension $\dim$ and the mesh shape-regularity parameter $\kappa_\calT$, such that
\[
\min_{\substack{\bv_{\ver} \in \Vha\\ \nabla{\cdot}\bv_{\ver} = \gha}} \norm{\bv_{\ver}-\btha}_{\oma}
\leq C \sup_{\substack{\varphi\in H^1_*(\oma)\\ \norm{\nabla \varphi}_{\oma}=1}}
\left\{(\gha,\varphi)_{\oma} + (\btha,\nabla \varphi)_{\oma} \right\}.
\]
\end{lemma}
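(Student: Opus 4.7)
The plan is to identify the constrained minimization as a discrete mixed finite element problem on the patch $\oma$ and to control its attained residual by the continuous dual norm on $H^1_*(\oma)$ appearing on the right-hand side. The pivotal ingredient is a discrete inf-sup condition for the divergence operator on the patch whose stability constant is independent of the polynomial degree $p$.

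First, I would introduce a Lagrange multiplier $q_\ver \in \Pp(\Ta)$ (with zero mean if $\ver\in\calVint\cup\calVextNb$) to enforce the constraint $\nabla{\cdot}\bv_\ver=\gha$, and characterise the minimizer $\bv_\ver^{\min}$ by the associated saddle-point system. Setting $\bm{\sigma}\eq\bv_\ver^{\min}-\btha\in\Vha$, the primal optimality condition reads $(\bm{\sigma},\bm{w}_\ver)_{\oma}=(q_\ver,\nabla{\cdot}\bm{w}_\ver)_{\oma}$ for every $\bm{w}_\ver\in\Vha$, while the divergence relation becomes $\nabla{\cdot}\bm{\sigma}=\gha-\nabla{\cdot}\btha$. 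Testing the first identity with $\bm{\sigma}$ itself yields
\[
\|\bm{\sigma}\|_{\oma}^2 = (q_\ver,\gha-\nabla{\cdot}\btha)_{\oma}.
\]

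Second, by Brezzi's theory the desired bound $\|\bm{\sigma}\|_{\oma}\le C\sup\{\cdots\}$ is equivalent to the existence of a $p$-robust continuous right-inverse of the divergence $\nabla{\cdot}\colon\Vha\to\Pp(\Ta)$, modulo constants in the interior/Neumann case. Such a right-inverse allows one to pass from the natural discrete dual norm of the residual functional on $\Pp(\Ta)$ to the continuous dual norm on $H^1_*(\oma)$: given $q_\ver\in\Pp(\Ta)$, one first solves an auxiliary continuous elliptic problem on $\oma$ to obtain $\varphi\in H^1_*(\oma)$ with $-\Delta\varphi=q_\ver$ (with suitable boundary conditions matching the definition of $H^1_*(\oma)$), then one approximates $\nabla\varphi$ stably within $\Vha$ so that the divergence is preserved; combined with the identity above this recovers the claimed estimate.

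The main obstacle is precisely the $p$-robustness of this right-inverse; a naive Bramble--Hilbert estimate would yield a constant deteriorating in $p$. In two dimensions the cited work \cite{Brae_Pill_Sch_p_rob_09} carries out the construction via a Helmholtz decomposition on the patch combined with a polynomial extension on a single triangle. In three dimensions \cite{Ern_Voh_p_rob_3D_20} reduces the patch problem to a sequence of single-simplex problems solved by means of the Demkowicz--Gopalakrishnan--Schöberl polynomial extension operators for $H^1$, $\Hcurl$, and $\Hdiv$ traces, arranged so that vertex, edge, and face contributions are handled consistently. The compatibility condition $(\gha,1)_{\oma}=0$ in the interior and Neumann cases ensures solvability of the local problem, while the definitions of $\Vha$ and $H^1_*(\oma)$ encode the Dirichlet/Neumann split uniformly, so that all patch types are treated within a single framework.
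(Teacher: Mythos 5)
The paper itself offers no proof of this lemma: it is imported verbatim from \cite[Theorem~7]{Brae_Pill_Sch_p_rob_09} in two dimensions and \cite[Corollaries~3.3, 3.6, and~3.8]{Ern_Voh_p_rob_3D_20} in three dimensions. Your overall plan---identify the crux as a $p$-robust right inverse of the divergence on the patch, note that a naive Bramble--Hilbert argument degrades in $p$, and defer the construction to those two references (Helmholtz decomposition plus polynomial extension in 2D, the Demkowicz--Gopalakrishnan--Sch\"oberl extension operators in 3D)---is faithful to how the cited works actually proceed, and you correctly identify the role of the compatibility condition $(\gha,1)_{\oma}=0$ and of the Dirichlet/Neumann split.

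There is, however, a concrete error in your first step. The field $\bm{\sigma}=\bv_{\ver}^{\min}-\btha$ does \emph{not} belong to $\Vha$: the lemma allows $\btha\in\RTNa$ to be a broken (normal-discontinuous) field, whereas $\Vha\subset\Hdiva$. You therefore cannot test the optimality condition with $\bm{\sigma}$, the divergence relation must be read elementwise as $\nabla_{\calT}{\cdot}\bm{\sigma}=\gha-\nabla_{\calT}{\cdot}\btha$, and the identity $\norm{\bm{\sigma}}_{\oma}^2=(q_\ver,\gha-\nabla_{\calT}{\cdot}\btha)_{\oma}$ does not hold. Even if it did, it would only see the elementwise divergence misfit and would entirely miss the normal jumps $\jump{\btha}{\cdot}\bn_F$ across the interior faces of the patch; an elementwise integration by parts of $(\btha,\nabla\varphi)_{\oma}$ shows that the right-hand side of the lemma controls precisely both contributions, and repairing those jumps is the main job of the equilibrated field. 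The route actually taken in \cite{Brae_Pill_Sch_p_rob_09, Ern_Voh_p_rob_3D_20} avoids the multiplier altogether: one first observes, by a duality argument, that the \emph{continuous} constrained minimum of $\norm{\bv-\btha}_{\oma}$ over all $\bv\in\Hdiva$ satisfying $\nabla{\cdot}\bv=\gha$ and the no-flux condition of $\Vha$ is \emph{equal} to the supremum on the right-hand side, and then proves the genuinely hard statement that the discrete minimum over $\Vha$ is bounded by a $p$-independent constant times this continuous minimum, by explicitly constructing an admissible discrete competitor. Your saddle-point detour, as set up, cannot be closed without in effect re-proving that statement.
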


We shall also use the following auxiliary bound for face terms based on the bubble function technique of~Verf{\"u}rth, \cf~\cite{Verf_13}, from a posteriori error analysis.

\begin{lemma}[Bound on face terms]\label{lem:faces}
Let a mesh face $F\in \calF$ be fixed, and let $\calT_F$ be the set of one or two mesh elements $K\in\calT$ to which $F$ belongs, with $\omf$ the corresponding open subdomain. Let $h_F$ denote the diameter of $F$.
Then, there exists a constant $C$, depending on the space dimension $\dim$, the mesh shape-regularity parameter $\kappa_\calT$, and the polynomial degree $p$, such that
\[
 h_F^{1/2}  \norm{q_h}_{F} \leq C \sup_{\substack{\varphi\in H^1(\omf)\\ \varphi=0 \text{ on } \pt \omf \setminus F\\ \norm{\nabla \varphi}_{\omf}=1}} (q_h,\varphi)_F \qquad \forall q_h \in \Pp(F).
\]
\end{lemma}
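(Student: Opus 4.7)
The plan is to apply the classical bubble-function technique of Verf\"urth. Let $F \in \calF$ be a fixed face with patch domain $\omf$, and let $q_h \in \Pp(F)$.

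First, I would introduce the face bubble function $b_F$ on $\omf$, defined as the product of the $d$ barycentric coordinates associated with the vertices of $F$, taken over each $K \in \TF$. By construction, $b_F$ is a piecewise polynomial on $\omf$, vanishes on $\partial \omf \setminus F$, is continuous across the shared face $F$, and satisfies $0 \leq b_F \leq 1$ with $b_F \equiv 0$ on $\partial F$. Next, I would extend $q_h$ from $F$ to a polynomial $\widetilde{q}_h$ on $\omf$; the natural choice is to extend $q_h$ as constant along the direction normal to a reference face and then pull back, producing a piecewise polynomial of degree at most $p$ on $\omf$ with trace $q_h$ on $F$. Setting $\varphi \coloneqq b_F \widetilde{q}_h$ then yields a function in $H^1(\omf)$ which vanishes on $\partial \omf \setminus F$, hence admissible in the supremum on the right-hand side.

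The key step is then to establish two scale-tracking estimates on $\varphi$, both proved by mapping to a shape-regular reference configuration on which everything is a norm on a finite-dimensional space. First, using the positivity of $b_F$ and norm equivalence on $\Pp(F)$,
\begin{equation*}
  \norm{q_h}_F^2 \leq C\,(b_F q_h, q_h)_F = C\,(q_h, \varphi)_F,
\end{equation*}
with $C$ depending only on $p$, $\dim$, and $\kappa_\calT$. Second, combining an inverse inequality in $\omf$ with the scaling of the reference bubble $\hat b_{\hat F}\hat{\widetilde{q}}_h$ (so that $\norm{\hat \nabla \hat \varphi}_{\hat\omega_F} \lesssim \norm{\hat q_h}_{\hat F}$) against the scaling factors $\norm{\nabla \varphi}_{\omf} \sim h_F^{d/2-1}\norm{\hat\nabla\hat\varphi}_{\hat\omega_F}$ and $\norm{q_h}_F \sim h_F^{(d-1)/2}\norm{\hat q_h}_{\hat F}$, I get
\begin{equation*}
  \norm{\nabla \varphi}_{\omf} \leq C\, h_F^{-1/2} \norm{q_h}_F.
\end{equation*}

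Finally, I normalize by taking $\widetilde\varphi \coloneqq \varphi/\norm{\nabla\varphi}_{\omf}$ (assuming $q_h \not\equiv 0$; otherwise the inequality is trivial), which is admissible in the supremum on the right-hand side. Combining the two bounds yields
\begin{equation*}
  (q_h, \widetilde \varphi)_F = \frac{(q_h, \varphi)_F}{\norm{\nabla \varphi}_{\omf}} \geq \frac{C^{-1} \norm{q_h}_F^2}{C\, h_F^{-1/2}\norm{q_h}_F} = C'\, h_F^{1/2}\norm{q_h}_F,
\end{equation*}
which is the claimed inequality. The only genuine technical obstacle is ensuring that when $F$ is interior (so $\omf$ has two elements), the extension $\widetilde{q}_h$ and the resulting $\varphi$ are continuous across $F$ so that $\varphi \in H^1(\omf)$; this is handled by choosing the extension to be symmetric with respect to reflection across $F$ in the reference configuration, matching on $F$ by construction. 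The scaling constants are all uniform in $h_F$ thanks to shape regularity of $\calT$, while the $p$-dependence is absorbed into $C$ since the inequality is only stated for fixed polynomial degree.
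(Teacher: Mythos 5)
Your proposal is correct and follows exactly the route the paper intends: the statement is attributed to the bubble-function technique of Verf\"urth, and your argument (face bubble $b_F$, polynomial extension $\widetilde q_h$, the two scaled norm-equivalence bounds $\norm{q_h}_F^2 \leq C\,(q_h, b_F\widetilde q_h)_F$ and $\norm{\nabla(b_F\widetilde q_h)}_{\omf}\leq C h_F^{-1/2}\norm{q_h}_F$, then normalization) is precisely that technique, with the $p$-dependence of the constant correctly absorbed as the lemma allows. The only remark is that your reflection-symmetry device for continuity across an interior face is unnecessary: the two elementwise extensions both have trace $q_h$ on $F$ by construction, which already ensures $b_F\widetilde q_h\in H^1(\omf)$.
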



We are now ready to prove the statement~\eqref{eq:proj_approx} from Theorem~\ref{thm:proj}, where we now employ the short-hand notation $\Eloc(\bm{v})$ from~\eqref{eq:local_minimizers}. Let $\bv \in \Hdivzero$ be arbitrary. Since it follows from $\nabla{\cdot}P^p_{\calT} (\bv) = \Pihp(\nabla{\cdot}\bv)$ that
\[
\frac{h_K}{p+1}\norm{\nabla{\cdot}\bv-\nabla{\cdot}P^p_{\calT} (\bv)}_K \leq \Eloc(\bv),
\]
it only remains to prove that
\begin{equation}\label{eq:proof_main_bound}
\norm{\bv-P^p_{\calT} (\bv)}_{K} \leq C \left\{ \sum_{K^{'\!\!}\in\calT_K} \Eloct(\bv)^2 \right\}^{\frac{1}{2}} \qquad \forall K \in \calT.
\end{equation}
We proceed for this purpose in two steps.

\paragraph{Step~1. Bound on $\saa$.} Recall that $\saa$ is defined in~\eqref{eq:sha_def} with $\bth$ defined elementwise in~\eqref{eq_L2_proj_constraint}.

\begin{lemma}[Bound on $\saa$]\label{lem:bound_sha}
There exists a constant $C$, depending only on $\dim$, $\kappa_\calT$, and $p$, such that
\begin{equation}\label{eq:sha_bound}
\norm{\saa - \Ihp(\psia\bth)}_{\oma} \leq C \left\{ \sum_{K\in\Ta} [\Eloc(\bv)]^2 \right\}^{\frac{1}{2}} \qquad \forall \ver \in \calV.
\end{equation}
\end{lemma}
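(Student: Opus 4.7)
The plan is to invoke Lemma~\ref{lem:main_stability_bound} with data $\gha=\Pihp(\psia\nabla{\cdot}\bv+\nabla\psia\cdot\bth)$ and target $\btha=\Ihp(\psia\bth)\in\RTNa$; the compatibility condition $(\gha,1)_{\oma}=0$ for $\ver\in\calVint\cup\calVextNb$ follows from Lemma~\ref{lem:rtn_orthogonality} since $\Pihp$ preserves constants. The lemma then reduces the proof of~\eqref{eq:sha_bound} to bounding the dual functional
\[
R(\varphi)\coloneqq(\gha,\varphi)_{\oma}+(\Ihp(\psia\bth),\nabla\varphi)_{\oma}
\]
uniformly over $\varphi\in H^1_*(\oma)$ with $\|\nabla\varphi\|_{\oma}=1$ by $C\big(\sum_{K\in\Ta}[\Eloc(\bv)]^2\big)^{1/2}$.

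Performing elementwise integration by parts in the second term of $R(\varphi)$, combined with the commuting identity~\eqref{eq:Ihp_identity} applied to $\psia\bth$ and with the elementwise divergence constraint $\nabla{\cdot}\bth=\Pihp(\nabla{\cdot}\bv)$ from~\eqref{eq_L2_proj_constraint}, yields a decomposition $R(\varphi)=D_1(\varphi)+D_2(\varphi)$, where the volume part simplifies to
\[
D_1(\varphi)=\big(\Pihp(\psia r),\varphi\big)_{\oma}=(\psia r,\Pihp\varphi)_{\oma},\qquad r\coloneqq\nabla{\cdot}\bv-\Pihp(\nabla{\cdot}\bv),
\]
and $D_2(\varphi)$ collects the interelement and boundary face contributions. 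The Poincar\'e--Friedrichs inequality on $H^1_*(\oma)$ gives $\|\Pihp\varphi\|_{\oma}\le\|\varphi\|_{\oma}\le Ch_{\oma}\|\nabla\varphi\|_{\oma}$, and, combined with $|\psia|\le 1$, mesh shape-regularity (so $h_{\oma}\sim h_K$ for $K\in\Ta$), and the identification of $(h_K/(p+1))\|r\|_K$ as the divergence component of $\Eloc(\bv)$, this bounds $|D_1(\varphi)|$ by $C(p+1)\big(\sum_{K\in\Ta}[\Eloc(\bv)]^2\big)^{1/2}\|\nabla\varphi\|_{\oma}$.

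The face term reads $D_2(\varphi)=\sum_{F\in\calFaint}\langle\Pi_F^p(\psia\jump{\bth}{\cdot}\bn_F),\varphi\rangle_F$, with $\jump{\bth}{\cdot}\bn_F$ denoting the usual interelement jump on interior faces and the single trace on Neumann boundary faces of $\oma$; contributions from the remaining boundary faces of $\oma$ vanish because $\psia=0$ on the interior part of $\partial\oma$ and $\varphi=0$ on $\GDa$. The central observation is that $\bv\in\Hdivzero$ forces the normal-trace jumps of $\bv$ to vanish on interior faces and $\bv{\cdot}\bn=0$ on $\GN$, so applying Green's formula on each element of $\omf$ to an arbitrary $\psi\in H^1(\omf)$ vanishing on $\partial\omf\setminus F$ produces the duality identity
\[
\langle\jump{\bth}{\cdot}\bn_F,\psi\rangle_F = -(\eta,\nabla\psi)_{\omf}-(r,\psi)_{\omf},\qquad \eta\coloneqq\bv-\bth.
\]
Lemma~\ref{lem:faces} together with Poincar\'e--Friedrichs on $\omf$ then gives $h_F^{1/2}\|\jump{\bth}{\cdot}\bn_F\|_F\le C(\|\eta\|_{\omf}+h_{\omf}\|r\|_{\omf})$. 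Combined with the discrete trace inequality and the patchwise Poincar\'e--Friedrichs inequality in the form $\sum_{F}h_F^{-1}\|\varphi\|_F^2\le C\|\nabla\varphi\|_{\oma}^2$, this yields $|D_2(\varphi)|\le C\big(\sum_{K\in\Ta}(\|\eta\|_K^2+h_K^2\|r\|_K^2)\big)^{1/2}\|\nabla\varphi\|_{\oma}$. The elementwise bound $\|\eta\|_K\le C\Eloc(\bv)$, with a $p$-dependent constant supplied by the constrained-versus-unconstrained $\RTNK$-best-approximation equivalence of Appendix~\ref{app:constr}, closes the estimate.

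The main obstacle is precisely the face estimate for $\jump{\bth}{\cdot}\bn_F$: because $\bv\in\Hdiv$ generally has no $L^2$-trace on faces, a direct trace inequality applied to $\eta=\bv-\bth$ is unavailable, and bounding $\|\jump{\bth}{\cdot}\bn_F\|_F$ via the polynomial inverse trace inequality only produces $\|\bth\|_K$, which is not controlled by $\Eloc(\bv)$. It is the Green's-formula-based duality, combined with Lemma~\ref{lem:faces}, that converts the $L^2(F)$-norm of the polynomial jump $\jump{\bth}{\cdot}\bn_F$ into a volume-norm bound in terms of $\eta$ and $r$ only, both of which are then controlled by the elementwise local-best approximation errors.
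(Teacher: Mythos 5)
Your proposal is correct and follows essentially the same route as the paper: the stability bound of Lemma~\ref{lem:main_stability_bound} with $\btha=\Ihp(\psia\bth)$, the elementwise integration by parts yielding the volume residual $(\Pihp(\psia\,\nabla_{\calT}{\cdot}(\bv-\bth)),\varphi)_{\oma}$ plus the jump terms, the Green's-formula duality combined with Lemma~\ref{lem:faces} to control $h_F^{1/2}\norm{\jump{\bth}{\cdot}\bn_F}_F$ by volume quantities, and the final appeal to Lemma~\ref{lem:local_const_equivalence}. The only (harmless) deviations are that your bound on the volume term is cruder than the paper's (which subtracts $\Pi_{\calT}^{p-1}\varphi$ for $p\ge1$ to avoid the extra factor $p+1$ --- irrelevant here since $C$ may depend on $p$), and that the constrained--unconstrained equivalence of Appendix~\ref{app:constr} is in fact $p$-robust, not $p$-dependent as you state.
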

\begin{proof}
First, since $\Ihp (\psia\bth) \in \RTNa$, we can apply Lemma~\ref{lem:main_stability_bound} to $\saa$,  with the choices $\btha \eq \Ihp (\psia\bth)$ and $\gha \eq \Pihp (\psia \nabla{\cdot} \bv +\nabla \psia {\cdot} \bth) \in\Pp(\Ta)$ to obtain
\begin{equation} \label{eq:etape}
\norm{\saa-\Ihp(\psia\bth)}_{\oma} \leq C \sup_{\substack{\varphi\in H^1_*(\oma)\\\norm{\nabla \varphi}_{\oma}=1}}\left\{ (\gha,\varphi)_{\oma} + (\Ihp(\psia\bth),\nabla \varphi)_{\oma}\right\},
\end{equation}
where the space $H^1_*(\oma)$ is defined in~\eqref{eq:H1s}.
Let $h_{\oma}$ denote the diameter of $\oma$ and recall the Poincar\'e inequality $ \norm{v}_{\oma} \leq C h_{\oma}\norm{\nabla v}_{\oma}$ on $H^1_*(\oma)$, with a constant $C$ depending only on the dimension $\dim$ and on $\kappa_\calT$. Moreover, note that the shape-regularity of the mesh implies that $h_{\oma} \approx h_K \approx h_F$ for all $K\in\Ta$ and all $F \in \calFaint$.

Define for any $\bvh \in \bC^1(\calT)$ the jump $\jump{\bvh}$ on an interior face $F$ shared by two mesh elements $K_+$ and $K_-$ by $\jump{\bvh} \coloneqq (\bvh|_{K_+})|_F - (\bvh|_{K_-})|_F$; here $\bn_F \eq \bn_{K_-}=-\bn_{K_+}$ is the unit normal to $F$ that points outward $K_-$ and inward $K_+$. Similarly, if $F$ is a boundary face, then we define $\jump{\bvh} \coloneqq \bvh|_F$.
To bound the right-hand side of~\eqref{eq:etape}, consider an arbitrary $\varphi\in H^1_*(\oma)$ such that $\norm{\nabla \varphi}_{\oma}=1$. Then, using integration by parts elementwise, we find that
\[
\begin{split}
 \big(\Ihp(\psia\bth), \nabla \varphi\big)_{\oma}
=&\sum_{F\in\calFaint}\big(\jump{\Ihp(\psia\bth)}{\cdot}\bn_F, \varphi\big)_{F} -\sum_{K\in\Ta}\big(\nabla{\cdot} \Ihp(\psi_a\bth), \varphi\big)_K \\
=&\sum_{F\in\calFaint}\big(\Pi_F^p(\psia\jump{\bth}{\cdot}\bn_F), \varphi\big)_{F} -\big(\Pihp(\nabla_{\calT}{\cdot} (\psi_a\bth)), \varphi\big)_{\oma}.
\end{split}
\]
Here, in the first identity, the set of faces can be restricted to $\calFaint$; indeed, for interior vertices, this follows from the fact that $\psia$ vanishes on $\partial\oma$, whereas for boundary vertices,  $\varphi \in H^1_*(\oma)$ vanishes on $\GDa$.
The second identity is then obtained from the definition of the elementwise canonical interpolant $\Ihp$ in~\eqref{eq:Ihp_def} and the commutation identity~\eqref{eq:Ihp_identity}. Expanding $\nabla_{\calT}{\cdot} (\psi_a\bth) = \nabla\psia{\cdot}\bth + \psia\nabla_{\calT}{\cdot}\bth$ and simplifying gives
\begin{equation}\label{eq:local_res_identity} \begin{split}
(\gha,\varphi)_{\oma} + (\Ihp(\psia\bth),\nabla\varphi)_{\oma}
= {} & (\Pihp(\psia \nabla_{\calT}{\cdot}(\bv-\bth)),\varphi)_{\oma}\\ {} & + \sum_{F\in\calFaint}\big(\Pi_F^p(\psia\jump{\bth}{\cdot}\bn_F), \varphi\big)_{F}.
\end{split}
\end{equation}
We now bound the two terms on the right-hand side of~\eqref{eq:local_res_identity} separately.

To bound the first term, we consider first the case $p\geq 1$: using the divergence constraint on $\bth$ in~\eqref{eq_L2_proj_constraint}, the orthogonality of the $L^2$-projections, the approximation bound $\norm{\varphi-\Pi_{\calT}^{p-1}\varphi }_K \leq C \frac{h_K}{p+1}\norm{\nabla\varphi}_K$ (note that $\frac1p\le \frac{2}{p+1}$ for all $p\ge1$), along with $\norm{\psia}_{\infty,\oma} = 1$ and $\norm{\nabla \varphi }_{\oma}=1$, we find that there is a constant $C$, depending only on $\dim$ and $\kappa_\calT$, such that
\begin{align*}
(\Pihp(\psia \nabla_{\calT}{\cdot}(\bv-\bth)),\varphi)_{\oma} = {} & \left(\nabla{\cdot}\bv - \Pihp(\nabla{\cdot}\bv), \psia \Pihp (\varphi - \Pi_{\calT}^{p-1} \varphi) \right)_\oma \\ \leq {} & C \left\{\sum_{K\in\Ta} \frac{h_K^2}{(p+1)^2} \norm{\nabla{\cdot}\bv - \Pihp(\nabla{\cdot}\bv)}_K^2\right\}^{\frac{1}{2}} \ifIMA \else\\ \fi \leq C \left\{\sum_{K\in\Ta} [\Eloc(\bv)]^2\right\}^{\frac{1}{2}}.
\end{align*}
For $p=0$, we instead apply the Cauchy--Schwarz inequality, the stability of the $L^2$-projection, the Poincar\'e inequality on $H^1_*(\oma)$, and $\norm{\nabla \varphi }_{\oma}=1$ to get
\begin{align*}
\abs{(\Pihp(\psia \nabla_{\calT}{\cdot}(\bv-\bth)),\varphi)_{\oma}} & \leq C \norm{\nabla_{\calT}{\cdot}(\bv - \bth)}_\oma h_{\oma}\norm{\nabla \varphi}_{\oma}\\ & \leq C \left\{\sum_{K\in\Ta} [\Eloc(\bv)]^2\right\}^{\frac{1}{2}},
\end{align*}
where $C$ depends only on $\dim$ and $\kappa_\calT$.

To bound the second term on the right-hand side of~\eqref{eq:local_res_identity}, we recall the trace inequality
\[
\norm{\varphi}^2_F \leq C \left( \norm{\nabla\varphi}_K\norm{\varphi}_K + h_K^{-1} \norm{\varphi}_K^2\right),
\]
for any $\varphi \in H^1(K)$ and $F\in\calF_K$, where $C$ depends only on $\dim$ and $\kappa_\calT$. Combined with the Poincar\'e inequality on $H^1_*(\oma)$ and $\norm{\nabla\varphi}_\oma=1$, this gives
\[
\sum_{F\in\calFaint}\abs{\big(\Pi_F^p(\psia\jump{\bth}{\cdot}\bn_F), \varphi\big)_{F}} \leq C\left\{\sum_{F\in\calFaint} h_F\norm{\jump{\bth}{\cdot}\bn_F}_F^2 \right\}^{\frac{1}{2}},
\]
with $C$ depending only on $\dim$ and $\kappa_\calT$.
Finally, we invoke Lemma~\ref{lem:faces}, yielding, for each $F\in\calFaint$,
\begin{equation}\label{eq:bound_term_2_p_dependence}
    h_F^{1/2}\norm{\jump{\bth}{\cdot}\bn_F}_F \leq C \sup_{\substack{w\in H^1(\omf)\\ w=0 \text{ on } \pt \omf \setminus F\\ \norm{\nabla w}_{\omf}=1}} (\jump{\bth}{\cdot}\bn_F,w)_F,
\end{equation}
where now the constant $C$ depends on the polynomial degree $p$ in addition to $\dim$ and $\kappa_\calT$.
Fix $w\in H^1(\omf)$ such that $w=0$ on $\pt \omf \setminus F$ and $\norm{\nabla w}_{\omf}=1$. By definition, $F\in\calFaint$ means that $F$ is either an internal face shared by two simplices, or a Neumann boundary face. Then, the zero extension of $w$ to $\Omega$ belongs to $H^1_{\GD}(\Omega)$. Since $\bv \in \Hdivzero$, we infer from the definition of the weak divergence that
\[
    (\nabla{\cdot}\bv, w)_{\omf} + (\bv,\nabla w)_{\omf} = 0.
\]
Consequently, developing $(\jump{\bth}{\cdot}\bn_F,w)_F$ shows that
\begin{align*}
  \abs{  (\jump{\bth}{\cdot}\bn_F,w)_F } & =  \abs{(\nabla_{\calT}{\cdot}\bth, w)_{\omf} + (\bth,\nabla w)_{\omf}} \\
  & \leq \abs{ (\nabla_{\calT}{\cdot}(\bth-\bv), w-\Pihp w)_{\omf}} + \abs{ (\bth - \bv,\nabla w)_{\omf}}\\
    & \leq \norm{\nabla_{\calT}{\cdot}(\bth-\bv)}_{\omf} \norm{w -\Pihp w}_{\omf} + \norm{\bth - \bv}_{\omf} \norm{\nabla w}_{\omf} \\
    & \leq C \sum_{K\in\TF} \left\{\norm{\bv - \bth}_K^2  + \frac{h_K^2}{(p+1)^2} \norm{\nabla{\cdot}(\bv - \bth)}_K^2\right\}^{1/2},
\end{align*}
owing to the Cauchy--Schwarz inequality, the orthogonality of the $L^2$-projection, and the Poincar\'e--Friedrichs inequality $\norm{w}_{\omf} \leq C h_F \norm{\nabla w}_{\omf}$. Hence, Lemma~\ref{lem:local_const_equivalence} below implies that
\[
\sum_{F\in\calFaint}\big(\Pi_F^p(\psia\jump{\bth}{\cdot}\bn_F), \varphi\big)_{F} \leq C \left\{ \sum_{K\in \Ta} [\Eloc(\bv)]^2  \right\}^{\frac{1}{2}},
\]
where the constant $C$ depends only on $\dim$, $\kappa_\calT$, and the polynomial degree $p$ via~\eqref{eq:bound_term_2_p_dependence}. Combining these bounds implies~\eqref{eq:sha_bound}.
\end{proof}

\paragraph{Step~2. Bound on $\norm{\bv-P^p_{\calT} (\bv)}_{K}$.}

Let $K \in \calT$. In this second and last step, we first show that
\begin{equation} \label{eq:eff}
    \norm{P^p_{\calT} (\bv) - \bth}_{K} \leq C \left\{ \sum_{K^{'\!\!}\in\calT_{K}} [\Eloct(\bv)]^2 \right\}^{\frac{1}{2}}.
\end{equation}
Recalling that $\calV_K$ denotes the set of vertices of the element $K$, using the partition of unity $\sum_{\ver\in\calV_{K}}\psia|_K = 1$ and the linearity of the elementwise canonical interpolant $\Ihp$~\eqref{eq:Ihp_def} as well as definition~\eqref{eq:bsh_def} of $P^p_{\calT} (\bv)$ and the fact that $\bth=\Ihp(\bth)$, we find that
\begin{align*}
 \left(P^p_{\calT}  (\bv) - \bth\right)|_K = \left(P^p_{\calT} (\bv) -  \Ihp(\bth)\right)|_K  = \sum_{\ver\in\calV_{K}}\left(\saa-\Ihp(\psia\bth) \right)|_K.
\end{align*}
Thus,
\[
    \norm{P^p_{\calT} (\bv) -\bth}_K^2 = \Bnorm{\sum_{\ver\in\VK}\big(\saa - \Ihp(\psia\bth)\big)}_K^2 \leq (d+1) \sum_{\ver \in \calV_{K}}\norm{\saa-\Ihp(\psia\bth)}_\oma^2,
\]
and Lemma~\ref{lem:bound_sha} then yields~\eqref{eq:eff}.

Finally, having obtained~\eqref{eq:eff}, the main bound~\eqref{eq:proof_main_bound} then follows from the triangle inequality and  Lemma~\ref{lem:local_const_equivalence}, since
\[
\norm{\bv - P^p_{\calT} (\bv)}_{K} \leq \norm{\bv-\bth}_{K} + \norm{\bth - P^p_{\calT} (\bv)}_{K} \leq C \left\{ \sum_{K^{'\!\!}\in\calT_{K}} [\Eloct(\bv)]^2 \right\}^{\frac{1}{2}}.
\]
This completes the proof of the approximation property~\eqref{eq:proj_approx} from Theorem~\ref{thm:proj}.

\subsection{Proof of the projection property~\eqref{eq:proj},
$\bL^2$ stability~\eqref{eq:stab}, and $\Hdiv$ stability~\eqref{eq:stab_div}}\label{sec:proof_proj_concl}

To prove~\eqref{eq:proj}, we observe that if $\bv \in \RTN \cap \Hdivzero$, then it follows from the definition~\eqref{eq:local_minimizers} that $\Eloc(\bm{v}) = 0$ for all $K \in \calT$, and thus~\eqref{eq:proj} follows immediately from~\eqref{eq:proj_approx}.

To prove~\eqref{eq:stab}, we observe that, for any $K \in \calT$, the triangle inequality yields
\[
    \norm{P^p_{\calT} (\bv)}_K \leq \norm{\bv}_K + \norm{\bv-P^p_{\calT} (\bv)}_K.
\]
The first term is trivially contained in the right-hand side of~\eqref{eq:stab}. Bounding the second one by~\eqref{eq:proj_approx}, the definition~\eqref{eq:local_minimizers} of $\Eloc(\bm{v})$ implies that
\[
    \Eloc(\bm{v}) \leq \norm{\bv}_K + \frac{h_K}{(p+1)}\norm{\nabla{\cdot}\bv - \Pihp(\nabla{\cdot}\bv)}_K.
\]
This shows that~\eqref{eq:stab} holds true.

Finally, from~\eqref{eq:stab}, the bound in~\eqref{eq:stab_div} follows immediately since $\frac{h_K}{p+1} \leq h_\Omega$ and since both terms $\norm{\Pihp (\nabla{\cdot} \bv)}_K$ and $\norm{\nabla{\cdot} \bv - \Pihp (\nabla{\cdot} \bv)}_K$ are bounded by $\norm{\nabla{\cdot} \bv}_K$.

\section{Proof of Theorem~\ref{thm:appr_est} ($hp$-optimal approximation estimates under minimal regularity)} \label{sec:proof_hp}

We present here a proof of Theorem~\ref{thm:appr_est}. For this purpose, we will combine Theorem~\ref{thm:loc_glob} with its unbalanced but polynomial-degree-robust variant that we develop first.

\subsection{Polynomial-degree-robust one-sided bound}\label{sec:equiv_p_rob}

We present her an auxiliary result which gives a bound where the global-best approximation error~\eqref{eq:global_local_equiv_1} is bounded in terms of the sums of local-best approximation errors~\eqref{eq:local_minimizers} with a constant that is {\em robust with respect to the polynomial degree}, but where the polynomial degree in the local approximation errors is $(p-1)$ instead of $p$. As a result, in contrast to Theorem~\ref{thm:loc_glob}, this is a one-sided inequality and not an equivalence, and it is valid only for $p\geq 1$.

\begin{proposition}[Polynomial-degree-robust bound]\label{prop:loc_glob_p+1}
There exists a  constant~$C$, depending only on the space dimension $\dim$ and the shape-regularity parameter $\kappa_\calT$ of $\calT$, such that, for any $\bv\in \Hdivzero$ and any $p\geq 1$,
\begin{equation} \label{eq:LocalGlobal:p+1}
[\Eglob(\bv)]^2 \leq C \sum_{K\in \calT} [\Elocp(\bv)]^2.
\end{equation}
\end{proposition}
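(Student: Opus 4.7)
My plan is to adapt the construction of Definition~\ref{def:proj} in an ``unbalanced'' fashion: the elementwise first step will target the smaller space $\RTNKm$, while the patchwise equilibration step is kept in the richer space $\Vha$. The gain of this asymmetry is that every orthogonality-with-projection argument in the analysis acquires one extra degree of freedom, enough to entirely bypass the $p$-suboptimal face-bubble estimate of Lemma~\ref{lem:faces}, which is the only source of $p$-dependence in the proof of Theorem~\ref{thm:proj}. The $p$-robustness of the remainder will be guaranteed by the $p$-robust patchwise stability of Lemma~\ref{lem:main_stability_bound} together with the $p$-robust equivalence of constrained and unconstrained best approximations on a simplex stated in Appendix~\ref{app:constr}.

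Concretely, I will first define $\bsp \in \bm{RTN}_{p-1}(\calT)$ elementwise by the constrained $\bLv$-best approximation
\[
\bsp|_K \eq \argmin_{\substack{\bv_K \in \RTNKm \\ \nabla{\cdot}\bv_K = \Pi_{\calT}^{p-1}(\nabla{\cdot}\bv)|_K}} \norm{\bv - \bv_K}_K.
\]
Appendix~\ref{app:constr} will then yield $\norm{\bv - \bsp}_K \leq C\,\Elocp(\bv)$ with $C$ depending only on $\dim$ and $\kappa_\calT$, while the Euler--Lagrange condition $(\bsp,\nabla\psia)_K = (\bv,\nabla\psia)_K$ supplies the patchwise compatibility condition exactly as in Lemma~\ref{lem:rtn_orthogonality}. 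For each $\ver\in\calV$ I then define $\saa^\star \in \Vha$ as in~\eqref{eq:sha_def} with $\bsp$ in place of $\bth$, and set $\widetilde{\bsh}\eq\sum_{\ver\in\calV}\saa^\star$. The partition-of-unity argument from the proof of Lemma~\ref{lem:commuting} still gives $\nabla{\cdot}\widetilde{\bsh} = \Pihp(\nabla{\cdot}\bv)$, so that $\widetilde{\bsh}\in\RTN\cap\Hdivzero$ is admissible for the minimization set defining $\Eglob(\bv)$.

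The triangle inequality $\norm{\bv-\widetilde{\bsh}}_K \leq \norm{\bv-\bsp}_K + \norm{\bsp-\widetilde{\bsh}}_K$ reduces matters to controlling the second summand. Since $\bsp\in\bm{RTN}_{p-1}(\calT)$ and $\psia\in\calP_1$, the product $\psia\bsp|_K$ already belongs to $\RTNK$, so $\Ihp(\psia\bsp)=\psia\bsp$ elementwise, and the partition-of-unity expansion of Section~\ref{sec:proof_approx_property} gives $\norm{\bsp-\widetilde{\bsh}}_K^2 \leq (d+1)\sum_{\ver\in\VK}\norm{\saa^\star-\Ihp(\psia\bsp)}_\oma^2$. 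The $p$-robust stability Lemma~\ref{lem:main_stability_bound} combined with identity~\eqref{eq:local_res_identity} (still valid with $\bsp$ in place of $\bth$) controls each patchwise contribution by the supremum over $\varphi\in H^1_*(\oma)$ with $\norm{\nabla\varphi}_\oma=1$ of a volume residual $(\Pihp(\psia\nabla_{\calT}{\cdot}(\bv-\bsp)),\varphi)_\oma$ plus a face-jump residual $\sum_F(\Pi_F^p(\psia\jump{\bsp}{\cdot}\bn_F),\varphi)_F$. The volume residual is recast via self-adjointness of $\Pihp$ as $(\nabla{\cdot}\bv - \Pi_{\calT}^{p-1}(\nabla{\cdot}\bv),\psia\Pihp\varphi)_\oma$; the $\bL^2$-orthogonality of $\nabla{\cdot}\bv - \Pi_{\calT}^{p-1}(\nabla{\cdot}\bv)$ to $\calP_{p-1}(\calT)$ allows subtraction of $\psia\Pi_{\calT}^{p-2}\varphi\in\calP_{p-1}(\calT)$ (or the elementwise mean of $\psia\varphi$ when $p=1$), and the standard $h/p$ $\bL^2$-projection estimate together with $\tfrac{1}{p-1}\leq\tfrac{2}{p}$ yields the $\tfrac{h_K}{p}$-weighted divergence part of $\Elocp(\bv)$.

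The main obstacle is the face-jump residual; this is precisely where Lemma~\ref{lem:faces} contributed the $p$-dependent constant in the proof of Theorem~\ref{thm:proj}. I will bypass it by combining the elementwise integration-by-parts identity
\[
(\psia\bsp, \nabla\varphi)_\oma + (\psia\nabla_{\calT}{\cdot}\bsp, \varphi)_\oma + (\bsp{\cdot}\nabla\psia, \varphi)_\oma = \sum_{F\in\calFaint}(\psia\jump{\bsp}{\cdot}\bn_F, \varphi)_F
\]
with the global identity, valid because $\psia\varphi$ extended by zero belongs to $H^1_{\GD}(\Omega)$ by mesh conformity and $\bv\in\Hdivzero$ imposes $\bv{\cdot}\bn=0$ on $\GN$,
\[
(\psia\bv, \nabla\varphi)_\oma + (\psia\nabla{\cdot}\bv, \varphi)_\oma + (\bv{\cdot}\nabla\psia, \varphi)_\oma = 0.
\]
Subtracting expresses the face-jump residual as the sum of three volume terms involving $\bsp-\bv$ and $\nabla_{\calT}{\cdot}\bsp-\nabla{\cdot}\bv$. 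The first and third are bounded $p$-robustly by $C\norm{\bv-\bsp}_\oma\norm{\nabla\varphi}_\oma$ via $\norm{\psia}_\infty\leq 1$, $\norm{\nabla\psia}_\infty\leq Ch_K^{-1}$, and the Poincar\'e inequality on $H^1_*(\oma)$, while the middle one is treated exactly as the volume residual above by $\calP_{p-1}$-orthogonality, again contributing the $\tfrac{h_K}{p}$-weighted divergence term. Summing over $\ver\in\VK$, over neighbouring elements, and over all $K\in\calT$, and observing that the divergence part of $\Eglob(\bv)$ is trivially dominated by that of $\Elocp(\bv)$ since $\calP_{p-1}\subset\calP_p$ and $\tfrac{h_K}{p+1}\leq\tfrac{h_K}{p}$, will finally deliver~\eqref{eq:LocalGlobal:p+1}.
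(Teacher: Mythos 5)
Your proposal is correct and follows essentially the same route as the paper's own proof (Definition~\ref{def:proj_bis} and Lemma~\ref{lem:bound_sha_p+1}): the unbalanced construction with $\RTNKm$ elementwise precisely so that $\psia\bth|_K\in\RTNK$, the $p$-robust patchwise stability of Lemma~\ref{lem:main_stability_bound}, and the $p$-robust constrained--unconstrained equivalence of Appendix~\ref{app:constr} are exactly the ingredients used there. The only cosmetic difference is that you first write the face-jump residual and then eliminate it by subtracting the elementwise and global integration-by-parts identities, whereas the paper works directly with the volume identity $(\bv,\nabla(\psia\varphi))_{\oma}+(\nabla{\cdot}\bv,\psia\varphi)_{\oma}=0$ and never introduces face terms; after your cancellation the two computations coincide.
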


The proof of Proposition~\ref{prop:loc_glob_p+1} is done in the same spirit as that of Theorem~\ref{thm:loc_glob}.
Let $\bv\in \Hdivzero$.
In order to show~\eqref{eq:LocalGlobal:p+1}, it again is enough to find $\bsh\in \RTN\cap \Hdivzero$ such that $\nabla{\cdot}\bsh=\Pihp(\nabla{\cdot}\bv)$ and
\begin{equation} \label{eq:bsh_bound_simplified_p+1}
\norm{\bv-\bsh} \leq C \left\{\sum_{K\in \calT} [\Elocp(\bv)]^2\right\}^{\frac{1}{2}},
\end{equation}
where $C$ is a  constant depending only on $\dim$ and $\kappa_\calT$.
To this purpose, we adapt Definition~\ref{def:proj} as follows.

\begin{definition}[Alternative locally-defined mapping from $\Hdivzero$ to $\RTN\cap \Hdivzero$] \label{def:proj_bis}
Let $\bv\in \Hdivzero$ be arbitrary. Let $\bth$ be defined elementwise by
\begin{equation}\label{eq_L2_proj_-1}
    \bth|_K \eq \argmin_{\substack{\bv_K \in \RTNKm \\ \nabla{\cdot}\bv_K=\Pi_{\calT}^{p-1}(\nabla{\cdot}\bv) }} \norm{\bv-\bv_K}_K \qquad \forall K \in \calT.
\end{equation}
For each mesh vertex $\ver\in \calV$, the patchwise contributions $\saa$ are now defined as
\begin{equation}\label{eq:sha_def_-1}
\saa \coloneqq \argmin_{\substack{\bva \in \Vha \\ \nabla{\cdot}\bva = \Pihp (\psia \nabla{\cdot} \bv) + \nabla \psia {\cdot} \bth}}
\norm{\bva - \psia\bth}_{\oma},
\end{equation}
with the spaces $\Vha$ still defined in~\eqref{eq:Vha_def}. Finally, after extending each $\saa$ from $\oma$ to the rest of $\Omega$ by zero, the equilibrated flux reconstruction $\bsh \in \RTN\cap \Hdivzero$, is defined as
\begin{equation}\label{eq:bsh_def_-1}
    \bsh \eq \sum_{\ver\in\calV}\saa.
\end{equation}
\end{definition}

Note that the elementwise minimization in~\eqref{eq_L2_proj_-1} is done over $(p-1)$-degree Raviart--Thomas--N\'{e}d\'{e}lec spaces, in contrast to~\eqref{eq_L2_proj_constraint}, and the RTN interpolation $\Ihp$ is not used in~\eqref{eq:sha_def_-1}, in contrast to~\eqref{eq:sha_def}.

Since the orthogonality property~\eqref{eq:orth_K} also holds here, we infer that~\eqref{eq:rtn_orthogonality} still holds with the above definitions. This in turn gives the necessary compatibility condition yielding existence and uniqueness for the local minimization problems~\eqref{eq:sha_def_-1} in the spirit of Lemma~\ref{lem:ex_un}.
Finally, just as in~\eqref{eq:equilibrated_proof}, we deduce that $\nabla{\cdot}\bsh=\Pihp(\nabla{\cdot}\bv)$.
It thus remains to prove that
\begin{equation} \label{eq:eff_p+1}
    \norm{\bth - \bsh} \leq C \left\{\sum_{K\in \calT} [\Elocp(\bv)]^2\right\}^{\frac{1}{2}},
\end{equation}
with $C$ only depending on $\dim$ and $\kappa_\calT$. Then~\eqref{eq:bsh_bound_simplified_p+1} follows from~\eqref{eq:eff_p+1} by the triangle inequality $\norm{\bv-\bsh} \leq \norm{\bv-\bth} + \norm{\bth-\bsh}$, where the divergence-constrained minimization in $\norm{\bv-\bth}_K$ is subordinate to the unconstrained one in $\Elocp(\bv)$ by Lemma~\ref{lem:local_const_equivalence} below applied with $(p-1)$ in place of $p$.

\begin{lemma}[Bound on $\saa$]\label{lem:bound_sha_p+1}
There exists a constant $C$, depending only on $\dim$ and $\kappa_\calT$, such that
\begin{equation}\label{eq:sha_bound_p+1}
\norm{\saa- \psia\bth}_{\oma}\leq C  \left\{\sum_{K\in\Ta} [\Elocp(\bv)]^2\right\}^{\frac{1}{2}} \qquad \forall \ver \in \calV.
\end{equation}
\end{lemma}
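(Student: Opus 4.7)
The plan is to apply the patchwise equilibration stability of Lemma~\ref{lem:main_stability_bound} with the choice $\btha \eq \psia\bth$, which belongs to $\RTNa$ because $\psia|_K$ is affine and $\bth|_K\in\RTNKm$, so $\psia\bth|_K\in\RTNK$. This substitutes $\psia\bth$ for the $\Ihp(\psia\bth)$ used in the proof of Lemma~\ref{lem:bound_sha}, which is exactly what removes the interelement face-jump terms responsible for the $p$-dependent constant coming from Lemma~\ref{lem:faces}. A routine verification shows that for every vertex class in $\calVint$, $\calVextDb$, $\calVextNb$, the product $\psia\varphi$ with $\varphi\in H^1_*(\oma)$ extends by zero to a legitimate test function against $\bv\in\Hdivzero$.

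The next step is an algebraic manipulation. Combining $(\nabla\psia{\cdot}\bth,\varphi)_\oma+(\psia\bth,\nabla\varphi)_\oma=(\bth,\nabla(\psia\varphi))_\oma$, splitting $\bth=\bv+(\bth-\bv)$, using $\bv\in\Hdivzero$ to obtain $(\bv,\nabla(\psia\varphi))_\oma=-(\nabla{\cdot}\bv,\psia\varphi)_\oma$, and noting that $\psia\,\Pi_{\calT}^{p-1}(\nabla{\cdot}\bv)\in\Pp(\calT)$ (because $\psia|_K$ is affine and $\Pi_{\calT}^{p-1}(\nabla{\cdot}\bv)|_K\in\calP_{p-1}(K)$) and is therefore invariant under $\Pihp$, yields the clean identity
\[
(\gha,\varphi)_\oma+(\psia\bth,\nabla\varphi)_\oma=-\big((I-\Pihp)\big[\psia\big(\nabla{\cdot}\bv-\Pi_{\calT}^{p-1}(\nabla{\cdot}\bv)\big)\big],\varphi\big)_\oma+(\bth-\bv,\nabla(\psia\varphi))_\oma.
\]
I would then bound each piece by $C\{\sum_{K\in\Ta}[\Elocp(\bv)]^2\}^{1/2}\|\nabla\varphi\|_\oma$ with constant independent of $p$. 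For the first piece, $L^2$-self-adjointness transfers $I-\Pihp$ onto $\varphi$, and the classical $p$-robust Jackson-type bound $\|\varphi-\Pihp\varphi\|_K\leq Ch_K(p+1)^{-1}\|\nabla\varphi\|_K$ (obtained by scaling the degree-$p$ $L^2$-projection estimate on the reference simplex and using that $\Pihp$ annihilates constants), together with $\|\psia\|_{\infty}\leq 1$ and $(p+1)^{-1}\leq p^{-1}$ for $p\geq 1$, delivers exactly the factor $h_K/p$ of the divergence contribution of $\Elocp(\bv)$. For the second piece, I would split $\nabla(\psia\varphi)=\varphi\nabla\psia+\psia\nabla\varphi$ elementwise. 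Since $\nabla\psia|_K$ is a constant vector, $c_K\nabla\psia$ lies in $\calP_0(K;\Real^d)\subset\RTNKm$ with zero divergence, so the Euler--Lagrange condition for~\eqref{eq_L2_proj_-1} gives $(\bth-\bv,c_K\nabla\psia)_K=0$ for any $c_K\in\Real$. Choosing $c_K$ as the mean of $\varphi$ on $K$ and using Poincar\'e, $\|\varphi-c_K\|_K\leq Ch_K\|\nabla\varphi\|_K$, together with $\|\nabla\psia\|_{\infty,K}\leq Ch_K^{-1}$, cancels the $h_K^{-1}$ scaling of $|\nabla\psia|$ and yields $|(\bth-\bv,\varphi\nabla\psia)_K|\leq C\|\bth-\bv\|_K\|\nabla\varphi\|_K$; the $\psia\nabla\varphi$ contribution is trivially bounded. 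Finally, the $p$-robust constrained-versus-unconstrained equivalence of Appendix~\ref{app:constr}, applied with degree $p-1$, gives $\|\bth-\bv\|_K\leq C\,\Elocp(\bv)$.

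The main obstacle is the design of the algebraic identity displayed above. It must simultaneously (i) eliminate the interelement face jumps of $\bth$ so as to bypass the $p$-dependent bound of Lemma~\ref{lem:faces}, and (ii) present the residual in a form where the Euler--Lagrange orthogonality of $\bth$ against $\calP_0\subset\calP_{p-1}$ can be invoked on the $\varphi\nabla\psia$ direction while the projection error of $\nabla{\cdot}\bv$ carries the $p$-robust Jackson factor $h_K/(p+1)$. Both mechanisms hinge on the degree shift $p\to p-1$ in the local minimization~\eqref{eq_L2_proj_-1}, which is also what forces the one-sided inequality involving $\Elocp$ rather than $\Eloc$ on the right-hand side.
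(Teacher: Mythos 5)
Your proposal is correct and follows essentially the same route as the paper's proof: apply Lemma~\ref{lem:main_stability_bound} with $\btha=\psia\bth$ and $\gha=\Pihp(\psia\nabla{\cdot}\bv)+\nabla\psia{\cdot}\bth$, derive the identity reducing the residual to the $(I-\Pihp)$-projection error of $\psia(\nabla{\cdot}\bv-\Pi_{\calT}^{p-1}(\nabla{\cdot}\bv))$ plus $(\bth-\bv,\nabla(\psia\varphi))_{\oma}$, and conclude with the $p$-robust Jackson bound and the constrained--unconstrained equivalence of Lemma~\ref{lem:local_const_equivalence} at degree $p-1$. The only (harmless) deviation is your elementwise treatment of $(\bth-\bv,\varphi\nabla\psia)$ via the Euler--Lagrange orthogonality against constants, where the paper simply bounds $\norm{\nabla(\psia\varphi)}_{\oma}\leq C\norm{\nabla\varphi}_{\oma}$ using the Poincar\'e inequality on $H^1_*(\oma)$.
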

\begin{proof}
Fix a vertex $\ver \in \calV$. We rely on Lemma~\ref{lem:main_stability_bound}, where we take $\btha \eq \psia\bth$ and $\gha \eq \Pihp (\psia \nabla{\cdot} \bv) + \nabla \psia {\cdot} \bth$ in order to apply it to our construction~\eqref{eq:sha_def_-1} from Definition~\ref{def:proj_bis}. This yields
\[
    \norm{\saa- \psia\bth}_{\oma} \leq C \sup_{\substack{v\in H^1_*(\oma)\\ \norm{\nabla \varphi}_{\oma}=1}} \left\{(\gha,\varphi)_{\oma} + (\btha,\nabla \varphi)_{\oma} \right\}.
\]
Let $\varphi\in H^1_*(\oma)$ with $\norm{\nabla \varphi}_{\oma}=1$ be fixed, where we recall that the space $H^1_*(\oma)$ is defined in~\eqref{eq:H1s}. Then, the product $\psia\varphi \in H^1_{\GD}(\Omega)$ for any $\ver\in\calV$ and thus the definition of the weak divergence implies that
\[
    \big(\bv, \nabla (\psia \varphi)\big)_{\oma} + \big(\nabla{\cdot} \bv, \psia \varphi \big)_{\oma} = 0.
\]
Then, the product rule and the orthogonality of the $L^2$-projection give
\begin{align*}
(\gha,\varphi)_{\oma} + (\btha,\nabla \varphi)_{\oma}
= {} & \big(\Pihp(\psi_a\nabla{\cdot} \bv), \varphi\big)_\oma  +(\nabla\psia{\cdot} \bth,\varphi)_{\oma} + \big(\psia\bth, \nabla \varphi\big)_{\oma} \\
= {} & \big(\nabla{\cdot} \bv,\psia \Pihp(\varphi)\big)_{\oma} + \big(\bth, \nabla (\psia \varphi)\big)_{\oma}\\
= {} & \big(\nabla{\cdot} \bv, \psia(\Pihp(\varphi)-\varphi)\big)_{\oma} + \big(\bth-\bv, \nabla (\psia \varphi)\big)_{\oma}\\
= {} & \big(\psia(\nabla{\cdot} \bv-\Pi_{\calT}^{p-1}(\nabla{\cdot}\bv)),  \Pihp(\varphi)-\varphi \big)_{\oma} + \big(\bth-\bv, \nabla (\psia \varphi)\big)_{\oma},
\end{align*}
since $\psia \Pi_{\calT}^{p-1}(\nabla{\cdot}\bv)$ is a piecewise polynomial of degree at most $p$.
Therefore, we have
\begin{align*}
\abs{(\gha,\varphi)_{\oma} + (\btha,\nabla \varphi)_{\oma}}
\leq {}& C  \sum_{K\in\Ta} \Big[\frac{h_K}{p}\norm{\nabla{\cdot} \bv-\Pi_{\calT}^{p-1}(\nabla{\cdot} \bv)}_K\Big] \norm{\nabla \varphi}_{\oma}\\
{} & +  \|\bv-\bth\|_{\oma}\norm{\nabla(\psia \varphi)}_{\oma}\\
\leq {} &C \left(1+\norm{\nabla(\psia \varphi)}_{\oma}\right) \left\{\sum_{K\in \Ta} [\Elocp(\bv)]^2\right\}^{\frac{1}{2}},
\end{align*}
where we have used $\norm{\psia}_{\infty, \oma}=1$, the $hp$ approximation bound $\norm{\varphi-\Pi_{\calT}^{p}(\varphi) }_K \leq C \frac{h_K}{p+1}\norm{\nabla\varphi}_K \leq C \frac{h_K}{p}\norm{\nabla\varphi}_K$, the Cauchy--Schwarz inequality, the scaling $\norm{\nabla\varphi}_{\oma}=1$, and Lemma~\ref{lem:local_const_equivalence}.
Finally, the bound~\eqref{eq:sha_bound_p+1} follows from the inequality $\norm{\nabla(\psia \varphi)}_{\oma}  \leq  C \norm{\nabla \varphi}_\oma \leq C$ for all $\varphi\in H^1_*(\oma)$, owing to the Poincar\'e inequality on $H^1_*(\oma)$ and $\norm{\nabla \varphi}_{\oma}=1$.
\end{proof}

Finally, we obtain~\eqref{eq:eff_p+1} from Lemma~\ref{lem:bound_sha_p+1} and the estimate
\[
    \norm{\bsh-\bth}^2 = \sum_{K \in \calT}\Bnorm{\sum_{\ver\in\VK}\big(\saa - \psia\bth\big)}_K^2 \leq (d+1) \sum_{\ver \in \calV}\norm{\saa-\psia\bth}_\oma^2.
\]
As explained above, \eqref{eq:eff_p+1} then implies~\eqref{eq:LocalGlobal:p+1} and completes the proof of Proposition~\ref{prop:loc_glob_p+1}.

\subsection{Proof of Theorem~\ref{thm:appr_est}}

The proof of Theorem~\ref{thm:appr_est} hinges on the bounds from Theorem~\ref{thm:loc_glob} and Proposition~\ref{prop:loc_glob_p+1}.
Recall the definitions~\eqref{eq:global_local_equiv_1} of $\Eglob(\bm{v})$ and~\eqref{eq:local_minimizers} of $\Eloc(\bm{v})$.
Recall also the notation $\delta_{s<1}\coloneqq1$ if $s<1$ and $\delta_{s<1}\coloneqq0$ if $s\ge1$.
We proceed in two steps.

\paragraph{Step~1. Case $p\leq s$.} We first suppose that $p\leq s$ and let $t \eq \min(s,p+1)$.
Here, we will employ Theorem~\ref{thm:loc_glob}.
Since $\Pp(K;\R^\dim) \subset \RTNK$, well-known $hp$-approximation bounds, see e.g. \cite[Lemma~4.1]{Bab_Sur_hp_FE_87}, imply that
\begin{equation}\label{eq:local_error_rates}
[ \Eloc(\bv)]^2 \leq C \bigg\{
\Big[ \frac{h_K^{t}}{(p+1)^{s}} \norm{v}_{\bH^s(K)}\Big]^2 + \delta_{s<1}\Big[\frac{h_K}{p+1} \norm{\nabla{\cdot}\bv}_K\Big]^2\bigg\},
\end{equation}
for each $K\in \calT$, with $C$ depending only on $s$, $\dim$, $\kappa_\calT$. Note that for $s<1$, we applied here the trivial bound $\norm{\nabla{\cdot}\bv - \Pihp(\nabla{\cdot}\bv)}_K\leq \norm{\nabla{\cdot}\bv}_K$ as $\bv|_K \in \bH^s(K)$ is insufficient to improve the bound on the error of the divergence. Combining~\eqref{eq:local_error_rates} with the first bound in~\eqref{eq:global_local_equiv_2} of Theorem~\ref{thm:loc_glob} then implies that there exists a constant  $C_{s,\dim,\kappa_\calT,p}$ depending only on $s$, $\dim$, $\kappa_\calT$, and $p$, such that
\[
[ \Eglob(\bv)]^2 \leq C_{s,\dim,\kappa_\calT,p}
\sum_{K\in\calT} \bigg\{\Big[ \frac{h_K^{t}}{(p+1)^{s}} \norm{v}_{\bH^s(K)}\Big]^2 + \delta_{s<1}\Big[\frac{h_K}{p+1} \norm{\nabla{\cdot}\bv}_K\Big]^2 \bigg\}.
\]
Define then the constant $C_{s,\dim,\kappa_\calT}^{\star} \eq \max_{0\leq p \leq s} C_{s,\dim,\kappa_\calT,p}$, so that, for all $p\leq s$,
\[
[ \Eglob(\bv)]^2 \leq  C_{s,\dim,\kappa_\calT}^{\star} \sum_{K\in\calT} \bigg\{\Big[ \frac{h_K^{t}}{(p+1)^{s}} \norm{v}_{\bH^s(K)}\Big]^2 + \delta_{s<1}\Big[\frac{h_K}{p+1} \norm{\nabla{\cdot}\bv}_K\Big]^2 \bigg\}.
\]
This implies~\eqref{eq:est_p+1} for any $p \leq s$ with constant $C=C_{s,\dim,\kappa_\calT}^{\star}$.

\paragraph{Step~2. Case $p>s$.} Now consider the case $p>s$; since $p$ is an integer, this implies that $p\geq 1$. Here we rely on Proposition~\ref{prop:loc_glob_p+1}.
The approximation bounds, similarly to in~\eqref{eq:local_error_rates}, imply that there exists a constant $C$, depending only on $s$, $\dim$, and $\kappa_\calT$, such that
\[
[\Elocp(\bv)]^2 \leq C \bigg\{
\Big[ \frac{h_K^{s}}{p^{s}}\norm{\bv}_{\bH^{s}(K)}\Big]^2 + \delta_{s<1}\Big[\frac{h_K}{p} \norm{\nabla{\cdot}\bv}_K\Big]^2 \bigg\},
\]
for all $K\in\calT$.
Note that $p+1\leq 2p$ for all $p\geq 1$, so that the terms $p^{s}$ in the denominators above can be replaced by $(p+1)^{s}$ at the cost of an extra $s$-dependent constant, and similarly for $1/p \leq 2/(p+1)$.
Hence, the inequality~\eqref{eq:LocalGlobal:p+1} of Proposition~\ref{prop:loc_glob_p+1} and summation over the elements of~$\calT$ shows that there exists a constant $C_{s,\dim,\kappa_{\calT}}^{\sharp}$  depending only on $s$, $\dim$, and $\kappa_\calT$ such that~\eqref{eq:est_p+1} holds with constant $C=C_{s,\dim,\kappa_{\calT}}^{\sharp}$ for all $p>s$.

\paragraph{Conclusion.} Combining Steps 1 and 2 shows that \eqref{eq:est_p+1} holds for general $s\geq 0$ and $p\geq 0$ with a constant $C$ that can be taken as $\max\{C_{s,\dim,\kappa_\calT}^{\star},C_{s,\dim,\kappa_\calT}^{\sharp}\}$, which then depends only on $s$, $\dim$, and $\kappa_\calT$.

\begin{remark}[Full $hp$-optimality]
Theorem~\ref{thm:appr_est} shows that optimal order convergence rates with respect to both the mesh-sizes $h_K$ and the polynomial degree $p$ can be obtained despite the unfavorable dependence of the constant $C$ on the polynomial degree $p$ in Theorem~\ref{thm:loc_glob} and unbalanced polynomial degrees in Proposition~\ref{prop:loc_glob_p+1}.
\end{remark}

\section{Application to a priori error estimates}\label{sec:MFEs_a_priori}

In this section we show how to apply the results of Section~\ref{sec:Meain-results-Statements} to the a priori error analysis of mixed finite element methods and least-squares mixed finite element methods for a model diffusion problem.

\subsection{Mixed finite element methods}
Let us consider the dual mixed finite element method for the Poisson model problem, following \ifIMA\else Raviart and Thomas~\fi\cite{Ra_Tho_MFE_77}, \ifIMA\else N{\'e}d{\'e}lec~\fi\cite{Ned_mix_R_3_80}, \ifIMA\else Roberts and Thomas~\fi\cite{Ro_Tho_91}, or \ifIMA\else Boffi~\eal~\fi\cite{Bof_Brez_For_MFEs_13}.
Let $f\in L^2(\Omega)$ and $\GN = \emptyset$ for simplicity, so that $\Hdivzero$ becomes $\HdivO$. Consider the Laplace problem of finding $u: \Omega \rightarrow \R$ such that
\begin{subequations}\label{eq:Lapl}\begin{alignat}{2}
    - \Delta u & = f \qquad & & \text{in } \Omega, \\
    u & = 0  & & \text{on } \pt \Omega.
\end{alignat}\end{subequations}
The primal weak formulation of~\eqref{eq:Lapl} reads: find $u\in \Hunz$ such that
\begin{equation}\label{eq:Lap:prim}
  (\nabla u, \nabla v) = (f,v) \qquad \forall v \in \Hunz.
\end{equation}
The dual weak formulation of~\eqref{eq:Lapl} then reads: find $\bsig \in \HdivO$ and $u\in L^2(\Omega)$ such that
\begin{subequations}\label{eq:MF}
\begin{alignat}{2}
(\bsig,\bv)-(u,\nabla{\cdot} \bv)&=0 \qquad \qquad & & \forall \bv \in \HdivO,\label{eq:MF1}\\
(\nabla{\cdot}\bsig,q)&=(f,q) & & \forall q\in L^2(\Omega). \label{eq:MF2}
\end{alignat}
Classically, $u$ from~\eqref{eq:Lap:prim} and~\eqref{eq:MF} coincide and $\bsig = - \nabla u$.
\end{subequations}
The dual mixed finite element method of order $p \geq 0$ for the problem~\eqref{eq:MF} then looks for the pair $\bsig_{\mathrm{M}}\in \RTN\cap \HdivO$ and $u_{\mathrm{M}}\in \Pp(\calT)$ such that
\begin{subequations}\label{eq:DMF}
\begin{alignat}{2}
(\bsig_{\mathrm{M}},\bvh)-(u_{\mathrm{M}},\nabla{\cdot} \bvh)&=0 \qquad \qquad & & \forall \bvh \in \RTN\cap \HdivO,\label{eq:DMF1}\\
(\nabla{\cdot}\bsig_{\mathrm{M}},q_{\calT})&=(f,q_{\calT}) & & \forall q_{\calT}\in \Pp(\calT).\label{eq:DMF2}
\end{alignat}
\end{subequations}
It is immediate to check from~\eqref{eq:MF2} and~\eqref{eq:DMF2} that $\nabla {\cdot} \bsig_{\mathrm{M}}=\Pihp(\nabla{\cdot} \bsig)$.  Furthermore, the following a priori error characterization is classical, \cf~\cite{Bof_Brez_For_MFEs_13}. We include its proof to highlight the precise arguments.

\begin{lemma}[A priori bound for mixed finite element methods] \label{lem:a_pr_DM} Let $\bsig_{\mathrm{M}}$ be the first component of the dual mixed finite solution solving~\eqref{eq:DMF}, approximating $\bsig$ from~\eqref{eq:MF}. Then
\begin{align*}
\norm{\bsig-\bsig_{\mathrm{M}}} = \min_{\substack{\bvh\in \RTN\cap \HdivO\\ \nabla{\cdot}\bvh=\Pihp(\nabla{\cdot}\bsig)}} \norm{\bsig-\bvh}.
\end{align*}
\end{lemma}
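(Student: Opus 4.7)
The plan is to prove the lemma by a standard Pythagoras argument in $\bL^2(\Omega)$, hinging on the $L^2$-orthogonality of the error $\bsig - \bsig_{\mathrm{M}}$ to the subspace of divergence-free fields in $\RTN \cap \HdivO$.

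First I would verify that $\bsig_{\mathrm{M}}$ itself belongs to the admissible minimization set. This is immediate from the already noted identity $\nabla{\cdot}\bsig_{\mathrm{M}} = \Pihp(\nabla{\cdot}\bsig)$, which follows by comparing~\eqref{eq:MF2} with~\eqref{eq:DMF2} and using that $\Pihp$ is the $L^2$-orthogonal projection onto $\Pp(\calT)$. Hence the minimum on the right-hand side is at most $\norm{\bsig - \bsig_{\mathrm{M}}}$.

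Next I would establish the crucial Galerkin orthogonality: for every $\bwh \in \RTN \cap \HdivO$ with $\nabla{\cdot}\bwh = 0$, we have $(\bsig - \bsig_{\mathrm{M}}, \bwh) = 0$. Indeed, testing~\eqref{eq:MF1} with such a $\bwh$ gives $(\bsig,\bwh) = (u,\nabla{\cdot}\bwh) = 0$, and testing~\eqref{eq:DMF1} with the same $\bwh$ gives $(\bsig_{\mathrm{M}},\bwh) = (u_{\mathrm{M}},\nabla{\cdot}\bwh) = 0$. Subtracting yields the claimed orthogonality.

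Finally, pick any admissible $\bvh \in \RTN \cap \HdivO$ with $\nabla{\cdot}\bvh = \Pihp(\nabla{\cdot}\bsig)$. Then $\bwh \eq \bsig_{\mathrm{M}} - \bvh$ lies in $\RTN \cap \HdivO$ and satisfies $\nabla{\cdot}\bwh = 0$, so by the orthogonality just proved, $(\bsig - \bsig_{\mathrm{M}}, \bsig_{\mathrm{M}} - \bvh) = 0$. The Pythagorean identity then gives
\[
\norm{\bsig - \bvh}^2 = \norm{\bsig - \bsig_{\mathrm{M}}}^2 + \norm{\bsig_{\mathrm{M}} - \bvh}^2 \geq \norm{\bsig - \bsig_{\mathrm{M}}}^2,
\]
with equality when $\bvh = \bsig_{\mathrm{M}}$. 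Combining this lower bound with the upper bound from the first step yields the claimed equality. There is no real obstacle here: the proof is a textbook Hilbert-space orthogonal projection argument, and the only point to be careful with is the characterization of the admissible set via the divergence compatibility inherited from~\eqref{eq:MF2}--\eqref{eq:DMF2}.
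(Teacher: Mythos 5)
Your proposal is correct and follows essentially the same route as the paper: both establish that $\bsig_{\mathrm{M}}$ is admissible via $\nabla{\cdot}\bsig_{\mathrm{M}}=\Pihp(\nabla{\cdot}\bsig)$, derive the Galerkin orthogonality $(\bsig-\bsig_{\mathrm{M}},\bvh-\bsig_{\mathrm{M}})=0$ for admissible $\bvh$ by testing with the divergence-free difference, and conclude. The only cosmetic difference is that you finish with the Pythagorean identity where the paper uses the Cauchy--Schwarz inequality.
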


\begin{proof}
Subtracting~\eqref{eq:DMF1} from~\eqref{eq:MF1}, we have
\begin{equation} \label{eq:MFE_orth}
(\bsig-\bsig_{\mathrm{M}},\bvh)-(u-u_{\mathrm{M}},\nabla{\cdot} \bvh) =0\quad \forall \bvh \in \RTN\cap \HdivO.
\end{equation}
Let $\bsh\in \RTN\cap \HdivO$ be such that $\nabla{\cdot}\bsh=\Pihp(\nabla{\cdot}\bsig)$. Taking $\bvh=\bsh-\bsig_{\mathrm{M}}$ in~\eqref{eq:MFE_orth}, we obtain, since $\nabla{\cdot}\bvh=0$,
\begin{align*}
(\bsig-\bsig_{\mathrm{M}},\bsh-\bsig_{\mathrm{M}}) =0.
\end{align*}
Now clearly
\begin{align*}
\norm{\bsig-\bsig_{\mathrm{M}}}^2=(\bsig-\bsig_{\mathrm{M}},\bsig-\bsig_{\mathrm{M}}) =(\bsig-\bsig_{\mathrm{M}},\bsig-\bsh) \leq \norm{\bsig-\bsig_{\mathrm{M}}}\,\norm{\bsig-\bsh},
\end{align*}
and hence $\norm{\bsig-\bsig_{\mathrm{M}}}\leq \norm{\bsig-\bsh}$. Since $\bsh$ is arbitrary subject to the divergence constraint and can be taken as $\bsig_{\mathrm{M}}$, we obtain the assertion.\end{proof}

Thus, $\norm{\bsig-\bsig_{\mathrm{M}}}$ can be readily estimated by using Theorems~\ref{thm:loc_glob} and~\ref{thm:appr_est}.
\subsection{Least-squares mixed finite element methods}\label{sec:LS_a_priori}
In this subsection, we showcase the application of our results to the least-squares mixed finite element method discussed in \ifIMA\else Pehlivanov~\eal~\fi\cite{Peh_Car_Laz_LS_MFEs_94}, \ifIMA\else Cai and Ku~\fi\cite{Cai_Ku_LS_MFE_L2_10}, and \ifIMA\else Ku~\fi\cite{Ku_LS_MFE_min_reg_13}, see also the references therein.

Let again $\GN = \emptyset$ for simplicity and $f\in L^2(\Omega)$. Let $\bsig\in\HdivO$ and $u\in \Hunz$ be such that
\begin{align*}
(\bsig,u)\coloneqq\arg\min_{(\bp, v)\in\HdivO\times \Hunz}\left\{\LO^2\norm{\nabla{\cdot}\bp-f}^2+\norm{\bp+\nabla v}^2\right\},
\end{align*}
where we recall that $\LO$ is a length scale equal to the diameter of $\Omega$.
Then $\bsig\in\HdivO$ and $u\in \Hunz$ solve the following system of equations:
\begin{subequations}\label{eq:LSMF}
\begin{alignat}{2}
(\bsig+\nabla u,\nabla v)&=0 \qquad \qquad \qquad & & \forall v \in \Hunz,\label{eq:LSMF1}\\
\LO^2(\nabla{\cdot}\bsig,\nabla{\cdot}\bp)+(\bsig+\nabla u,\bp)&=\LO^2(f,\nabla{\cdot}\bp) & & \forall \bp\in \HdivO. \label{eq:LSMF2}
\end{alignat}
\end{subequations}
Again, $\bsig$ and $u$ coincide with the solutions of~\eqref{eq:Lap:prim} and~\eqref{eq:MF}.
Let $p\geq 0$ and $q\geq 1$ denote two fixed polynomial degrees. The least-squares mixed finite element method for the problem~\eqref{eq:LSMF} consists of finding
$\bsig_{\mathrm{LS}} \in \RTN\cap\HdivO$ and $u_{\mathrm{LS}}\in \calP^q(\calT)\cap \Hunz$ such that
\begin{subequations}\label{eq:DLSMF}
\begin{alignat}{2}
(\bsig_{\mathrm{LS}}+\nabla u_{\mathrm{LS}},\nabla v_{\calT})&=0 \,\qquad \qquad \qquad & & \forall v_{\calT} \! \in \! \calP^q(\calT) \! \cap \! \Hunz,\label{eq:DLSMF1}\\
\LO^2(\nabla{\cdot}\bsig_{\mathrm{LS}},\nabla{\cdot}\bp_{\calT})+(\bsig_{\mathrm{LS}}+\nabla u_{\mathrm{LS}},\bp_{\calT})&=\LO^2(f,\nabla{\cdot}\bp_{\calT}) & & \forall \bp_{\calT} \! \in \! \RTN \! \cap \! \HdivO. \label{eq:DLSMF2}
\end{alignat}
\end{subequations}
Similarly to Lemma~\ref{lem:a_pr_DM}, we can obtain the following a priori error characterization.

\begin{lemma}[A priori bound for least-squares mixed finite element methods] \label{lem:a_pr_LSM} Let ($\bsig_{\mathrm{LS}}, u_{\mathrm{LS}})$ be the least-squares mixed finite solution pair solving~\eqref{eq:DLSMF}, approximating $(\bsig,u)$ from~\eqref{eq:LSMF}. Then there exists a  generic constant $C$, at most equal to $17$, such that
\begin{align*}
\|\bsig-\bsig_{\mathrm{LS}}\| + \|\nabla (u-u_{\mathrm{LS}})\|
\leq  {} & C \left(
\min_{\substack{\bvh\in \RTN\cap \HdivO\\\nabla{\cdot}\bvh=\Pihp(\nabla{\cdot}\bsig)}} \|\bsig-\bvh\| +\min_{v_{\calT}\in \calP^q(\calT)\cap \Hunz}\|\nabla(u-v_{\calT})\|\right).
\end{align*}
\end{lemma}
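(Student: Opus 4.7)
The plan is a C\'ea-type quasi-optimality argument in the natural energy norm of the least-squares formulation, coupled with a Poincar\'e-based norm equivalence. I would first introduce the symmetric positive-definite bilinear form
\[
a((\bp,v),(\bq,w)) \eq \LO^2(\nabla{\cdot}\bp,\nabla{\cdot}\bq) + (\bp+\nabla v,\bq+\nabla w)
\]
on $\HdivO \times \Hunz$, with induced energy norm $|||(\bp,v)|||^2 \eq a((\bp,v),(\bp,v))$. Both~\eqref{eq:LSMF} and~\eqref{eq:DLSMF} can be recast as $a((\bsig,u),\cdot) = \LO^2(f,\nabla{\cdot}\cdot)$ tested on the appropriate continuous or discrete space, yielding the Galerkin orthogonality $a((\bsig-\bsig_{\mathrm{LS}},u-u_{\mathrm{LS}}),(\bp_\calT,w_\calT)) = 0$ for every discrete pair. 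Symmetry and positive-definiteness of $a$ then give the C\'ea bound $|||(\bsig-\bsig_{\mathrm{LS}},u-u_{\mathrm{LS}})||| \le |||(\bsig-\bp_\calT,u-w_\calT)|||$ for any such pair.

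Next I would establish the norm equivalence $\|\bp\|+\|\nabla v\| \le C_1|||(\bp,v)|||$ on $\HdivO\times\Hunz$. Expanding $\|\bp+\nabla v\|^2 = \|\bp\|^2 + 2(\bp,\nabla v) + \|\nabla v\|^2$, using the integration by parts $(\bp,\nabla v) = -(\nabla{\cdot}\bp,v)$ (valid since $v \in \Hunz$) together with the Poincar\'e inequality $\|v\| \le C_P \LO\|\nabla v\|$ and Young's inequality, I obtain this bound with an explicit constant $C_1$ of moderate size (of order a few units for standard domains).

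I would then pick the discrete pair $(\bvh^\ast, v_\calT^\ast)$ attaining the two minima on the right-hand side of the claim. Since $\nabla{\cdot}\bvh^\ast = \Pihp(\nabla{\cdot}\bsig)$, the expansion
\[
|||(\bsig-\bvh^\ast,u-v_\calT^\ast)|||^2 = \LO^2\|\nabla{\cdot}\bsig-\Pihp(\nabla{\cdot}\bsig)\|^2 + \|(\bsig-\bvh^\ast)+\nabla(u-v_\calT^\ast)\|^2
\]
together with the triangle inequality bounds the second summand by $(\|\bsig-\bvh^\ast\|+\|\nabla(u-v_\calT^\ast)\|)^2$.

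The hard part will be absorbing the divergence-oscillation term $\LO\|\nabla{\cdot}\bsig-\Pihp(\nabla{\cdot}\bsig)\|$, which arises in the C\'ea energy-norm bound but is absent from the right-hand side of the claim. To handle it, I would test the discrete Galerkin equation~\eqref{eq:DLSMF2} with a specific $\bp_\calT\in\RTN\cap\HdivO$ whose divergence equals $\nabla{\cdot}\bsig_{\mathrm{LS}}-\Pihp(\nabla{\cdot}\bsig)\in\Pp(\calT)$ and whose $L^2$-norm satisfies $\|\bp_\calT\| \le C\LO\|\nabla{\cdot}\bp_\calT\|$; such a $\bp_\calT$ may be constructed by applying the commuting projector $P^p_\calT$ of Definition~\ref{def:proj} to a Bogovskii-type continuous divergence lift and using the commuting identity~\eqref{eq:commuting}. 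This yields $\LO\|\Pihp(\nabla{\cdot}\bsig)-\nabla{\cdot}\bsig_{\mathrm{LS}}\| \le C_2\|\bsig_{\mathrm{LS}}+\nabla u_{\mathrm{LS}}\|$, which together with the triangle inequality $\|\nabla{\cdot}\bsig-\Pihp(\nabla{\cdot}\bsig)\| \le \|\nabla{\cdot}(\bsig-\bsig_{\mathrm{LS}})\|+\|\nabla{\cdot}\bsig_{\mathrm{LS}}-\Pihp(\nabla{\cdot}\bsig)\|$ and the identity $\|\bsig_{\mathrm{LS}}+\nabla u_{\mathrm{LS}}\| = \|(\bsig-\bsig_{\mathrm{LS}})+\nabla(u-u_{\mathrm{LS}})\|$ absorbs the oscillation into the left-hand side. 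Tracking the constants from Poincar\'e, Young, and the divergence-lift then yields the explicit bound with $C \le 17$.
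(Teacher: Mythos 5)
Your setup (the least-squares bilinear form, Galerkin orthogonality, coercivity via Green's theorem and Poincar\'e) matches the paper's, but the symmetric C\'ea step is where the argument breaks. Passing to the energy-norm best approximation necessarily produces the term $\LO^2\norm{\nabla{\cdot}\bsig-\Pihp(\nabla{\cdot}\bsig)}^2$, since \emph{every} admissible $\bvh$ has $\nabla{\cdot}\bvh\in\Pp(\calT)$ and hence $\norm{\nabla{\cdot}(\bsig-\bvh)}\geq\norm{\nabla{\cdot}\bsig-\Pihp(\nabla{\cdot}\bsig)}$; you correctly identify that this term must be removed, but the proposed absorption does not close. Writing $A=\LO\norm{\nabla{\cdot}(\bsig-\bsig_{\mathrm{LS}})}$, $B=\norm{(\bsig-\bsig_{\mathrm{LS}})+\nabla(u-u_{\mathrm{LS}})}$ and $D$ for the best-approximation term, your chain gives $A^2+B^2\leq (A+C_2B)^2+D^2$, in which $A^2$ cancels and the uncontrolled cross term $2C_2AB$ remains, so nothing is absorbed (and if $C_2\geq1$ the $B^2$ terms cancel too). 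Moreover, the constant $C_2$ from a Bogovskii lift composed with $P^p_{\calT}$ would depend on $\kappa_\calT$ and $p$, which is incompatible with the claimed absolute constant $C\leq 17$.

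The paper avoids the oscillation term altogether by \emph{not} using the symmetric C\'ea bound. It applies the coercivity estimate (which controls $\norm{\bp}^2+\LO^2\norm{\nabla{\cdot}\bp}^2+\norm{\nabla v}^2$, stronger than the energy norm) to the discrete differences $\bp_{\calT}=\bvh-\bsig_{\mathrm{LS}}$, $q_{\calT}=v_{\calT}-u_{\mathrm{LS}}$, uses Galerkin orthogonality to replace them by $\calA(\bvh-\bsig,v_{\calT}-u;\bp_{\calT},q_{\calT})$, and then observes that the divergence contribution $(\nabla{\cdot}(\bvh-\bsig),\nabla{\cdot}\bp_{\calT})$ vanishes identically, because $\nabla{\cdot}(\bvh-\bsig)=\Pihp(\nabla{\cdot}\bsig)-\nabla{\cdot}\bsig$ is $L^2$-orthogonal to $\nabla{\cdot}\bp_{\calT}\in\Pp(\calT)$. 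The remaining terms are estimated by Cauchy--Schwarz and Young, giving $\norm{\bp_{\calT}}+\norm{\nabla q_{\calT}}\leq 2C(\norm{\bsig-\bvh}+\norm{\nabla(u-v_{\calT})})$ with the coercivity constant $C=8$, and the triangle inequality yields $17$. You should restructure your argument along these lines; no divergence lift is needed.
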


\begin{proof}
Define the bilinear form $\calA$ on $(\HdivO\times \Hunz)\times (\HdivO\times \Hunz)$ by
\begin{align*}
\calA(\bsig,u;\bp,v)\coloneqq(\bsig+\nabla u,\nabla v)+\LO^2(\nabla{\cdot}\bsig,\nabla{\cdot}\bp)+(\bsig+\nabla u,\bp).
\end{align*}
We have the following orthogonality from~\eqref{eq:LSMF} and~\eqref{eq:DLSMF}:
\begin{align}\label{eq:OrthoGonality:LSMFE}
\calA(\bsig-\bsig_{\mathrm{LS}},u-u_{\mathrm{LS}};\bp_{\calT},v_{\calT})=0
\end{align}
for all $\bp_{\calT}\in \RTN\cap\HdivO$ and for all $v_{\calT}\in \calP^q(\calT)\cap \Hunz$.
Moreover, the following coercivity is known from~\cite{Peh_Car_Laz_LS_MFEs_94}: there exists a  constant $C$ such that
\begin{equation}\label{eq:Coercivity:LSMFE}
\calA(\bp,v;\bp,v) \geq \frac{1}{C} \left(\|\bp\|^2 + \LO^2\|\nabla{\cdot}\bp\|^2+\|\nabla v\|^2\right) \qquad \forall (\bp, v)\in\HdivO\times \Hunz).
\end{equation}
Indeed, owing to the Cauchy--Schwarz and Young inequalities, we have, for any $0 < \varepsilon < 2$,
\begin{align*}
\calA(\bp,v;\bp,v)
= {} &\|\nabla v\|^2 + (2-\varepsilon)(\bp,\nabla v)+ \LO^2 \|\nabla{\cdot}\bp\|^2 + \|\bp\|^2 + \varepsilon(\nabla v,\bp)\\
\geq {} & \|\bp\|^2 + \|\nabla v\|^2 - \frac{2 - \varepsilon}{2}(\|\bp\|^2 + \|\nabla v\|^2) + \LO^2 \|\nabla{\cdot}\bp\|^2 - \varepsilon \|\nabla{\cdot}\bp\|\|v\|\\
\geq {} & \frac{\varepsilon}{2}(\|\bp\|^2 + \|\nabla v\|^2) + \LO^2 \|\nabla{\cdot}\bp\|^2 - \varepsilon C_{\mathrm{PF}} h_{\Omega} \left(C_{\mathrm{PF}} h_{\Omega}\|\nabla{\cdot}\bp\|^2 + \frac{1}{4 C_{\mathrm{PF}} h_{\Omega}}\|\nabla v\|^2\right)\\
= {} & \frac{\varepsilon}{2}\|\bp\|^2 + \frac{\varepsilon}{4}\|\nabla v\|^2 + \|\nabla{\cdot}\bp\|^2 \big(\LO^2 - \varepsilon C_{\mathrm{PF}}^2 h_{\Omega}^2\big),
\end{align*}
where we have also employed the Green theorem $(\nabla v,\bp) = - (\nabla{\cdot}\bp,v)$ and the Poincar\'e--Friedrichs inequality $\|v\| \leq C_{\mathrm{PF}} h_{\Omega}\|\nabla v\|$ (here $h_{\Omega}$ is the diameter of $\Omega$ and $C_{\mathrm{PF}} \leq 1$ a generic constant). The assertion~\eqref{eq:Coercivity:LSMFE} follows by choosing, e.g., $\varepsilon = \LO^2 / (2 C_{\mathrm{PF}}^2 h_\Omega^2 )$. Note that, employing $C_{\mathrm{PF}} =1$, the constant $C$ in~\eqref{eq:Coercivity:LSMFE} can be taken as $8$.

Let now $\bvh\in \RTN\cap \HdivO$ be such that $\nabla{\cdot}\bvh=\Pihp(\nabla{\cdot}\bsig)$ and $v_{\calT}\in \calP^q(\calT)\cap \Hunz$ be an arbitrary function.
Set $q_{\calT}=v_{\calT}-u_{\mathrm{LS}}$ and $\bp_{\calT}=\bvh-\bsig_{\mathrm{LS}}$. Then using~\eqref{eq:OrthoGonality:LSMFE} and~\eqref{eq:Coercivity:LSMFE}, we find
\begin{align*}
\frac{1}{C} \left(\|\bp_{\calT}\|^2+\|\nabla q_{\calT}\|^2\right) \leq {} & \calA(\bvh-\bsig_{\mathrm{LS}},v_{\calT}-u_{\mathrm{LS}};\bp_{\calT},q_{\calT})\\
= {} & \calA(\bvh-\bsig,v_{\calT}-u;\bp_{\calT},q_{\calT})\\
= {} & (\bvh-\bsig+\nabla (v_{\calT}-u),\nabla q_{\calT})+\LO^2 (\nabla{\cdot}(\bvh-\bsig),\nabla{\cdot}\bp_{\calT})\\
{} & +(\bvh-\bsig+\nabla (v_{\calT}-u),\bp_{\calT}).
\end{align*}
Since $\nabla{\cdot}\bvh=\Pihp(\nabla{\cdot}\bsig)$ and $\nabla{\cdot}\bp_{\calT}\in \calP^p(\calT)$, we have $(\nabla{\cdot}(\bvh-\bsig),\nabla{\cdot}\bp_{\calT})=0$. Using the Cauchy--Schwarz and the Young inequality, we then obtain, with the constant $C$ from~\eqref{eq:Coercivity:LSMFE},
\begin{align*}
\|\bp_{\calT}\|+\|\nabla q_{\calT}\| &\leq  2C \left(\|\bsig-\bvh\|+\|\nabla(u-v_{\calT})\|\right),
\end{align*}
which proves the claim owing to the triangle inequality and since $\bvh$ and $v_{\calT}$ are arbitrary.
\end{proof}

The two terms in the error bound from Lemma~\ref{lem:a_pr_LSM} are uncoupled. For the first one, we can again straightforwardly use Theorems~\ref{thm:loc_glob} and~\ref{thm:appr_est}. For the second one, the result of \ifIMA\else Veeser~\fi\cite{Veeser_approx_grads_16} yields
\[
    \min_{v_{\calT}\in \calP^q(\calT)\cap \Hunz}\|\nabla(u-v_{\calT})\|^2 \leq C
    \sum_{K\in \calT} \min_{q_{K} \in \calP^q(K)}\|\nabla (u-q_{K})\|_K^2,
\]
where the constant $C$ depends only on the space dimension $\dim$, the shape-regularity parameter $\kappa_\calT$ of $\calT$, and the polynomial degree $q$,
which is again optimal.

Finally, a localized estimate for the error $\nabla{\cdot}(\bsig-\bsig_{\mathrm{LS}})$ follows by the combination of the above results with the following lemma.

\begin{lemma}[A priori bound on the divergence for least-squares mixed finite element methods] \label{lem:a_pr_LSM_div} Let $(\bsig_{\mathrm{LS}},u_{\mathrm{LS}})$ be the least-squares mixed finite solution solving~\eqref{eq:DLSMF}, approximating $(\bsig,u)$ from~\eqref{eq:LSMF}. Then
\begin{align*}
\LO^2 \|\nabla{\cdot}(\bsig-\bsig_{\mathrm{LS}})\|^2 \leq {} & \LO^2 \|\nabla{\cdot}\bsig-\Pihp(\nabla{\cdot}\bsig)\|^2+ \|\nabla(u-u_{\mathrm{LS}})\|^2 \\
{} & +\min_{\substack{\bvh\in \RTN\cap \HdivO\\\nabla{\cdot}\bvh=\Pihp(\nabla{\cdot}\bsig)}} \|\bsig-\bvh\|^2.
\end{align*}
\end{lemma}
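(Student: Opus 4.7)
}

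The plan is to split $\nabla{\cdot}(\bsig-\bsig_{\mathrm{LS}})$ into two orthogonal pieces and then estimate only the projected piece by testing the discrete least-squares equation against a well-chosen function.

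\textbf{Step~1: orthogonal decomposition.} Since $\nabla{\cdot}\bsig_{\mathrm{LS}} \in \calP^p(\calT)$, the $L^2$-orthogonality of $\Pihp$ yields
\begin{equation*}
\|\nabla{\cdot}(\bsig-\bsig_{\mathrm{LS}})\|^2
= \|\nabla{\cdot}\bsig - \Pihp(\nabla{\cdot}\bsig)\|^2
+ \|\Pihp(\nabla{\cdot}\bsig) - \nabla{\cdot}\bsig_{\mathrm{LS}}\|^2.
\end{equation*}
Thus it suffices to bound the second term by $\LO^{-2}$ times the sum of the remaining two terms on the right-hand side of the claimed inequality.

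\textbf{Step~2: consequence of the discrete LS equation.} Fix $\bvh\in \RTN\cap\HdivO$ with $\nabla{\cdot}\bvh = \Pihp(\nabla{\cdot}\bsig)$, and set $\bp_{\calT} \eq \bvh - \bsig_{\mathrm{LS}} \in \RTN\cap\HdivO$, so that $\nabla{\cdot}\bp_{\calT} = \Pihp(\nabla{\cdot}\bsig) - \nabla{\cdot}\bsig_{\mathrm{LS}}$. Testing~\eqref{eq:DLSMF2} with this $\bp_{\calT}$ and using $f = \nabla{\cdot}\bsig$ together with $(\nabla{\cdot}\bsig - \Pihp(\nabla{\cdot}\bsig), \nabla{\cdot}\bp_{\calT}) = 0$, I obtain
\begin{equation*}
\LO^2 \|\nabla{\cdot}\bp_{\calT}\|^2 = (\bsig_{\mathrm{LS}}+\nabla u_{\mathrm{LS}}, \bp_{\calT}).
\end{equation*}

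\textbf{Step~3: rewriting via the continuous relation $\bsig + \nabla u = 0$.} Using $\bsig + \nabla u = 0$ from~\eqref{eq:LSMF1}, I rewrite $\bsig_{\mathrm{LS}}+\nabla u_{\mathrm{LS}} = -(\bsig-\bsig_{\mathrm{LS}}) - \nabla(u-u_{\mathrm{LS}})$. Then, decomposing $\bsig - \bsig_{\mathrm{LS}} = (\bsig - \bvh) + \bp_{\calT}$ and moving the resulting $\|\bp_{\calT}\|^2$ to the left-hand side gives
\begin{equation*}
\LO^2 \|\nabla{\cdot}\bp_{\calT}\|^2 + \|\bp_{\calT}\|^2
= -(\bsig - \bvh,\bp_{\calT}) - (\nabla(u-u_{\mathrm{LS}}),\bp_{\calT}).
\end{equation*}

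\textbf{Step~4: Cauchy--Schwarz, Young, and conclusion.} Applying Cauchy--Schwarz and Young's inequality with parameter $1$ on both terms on the right gives
\begin{equation*}
\LO^2 \|\nabla{\cdot}\bp_{\calT}\|^2 + \|\bp_{\calT}\|^2
\leq \tfrac12 \|\bsig-\bvh\|^2 + \tfrac12 \|\nabla(u-u_{\mathrm{LS}})\|^2 + \|\bp_{\calT}\|^2,
\end{equation*}
so that after cancellation
\begin{equation*}
\LO^2 \|\Pihp(\nabla{\cdot}\bsig) - \nabla{\cdot}\bsig_{\mathrm{LS}}\|^2
\leq \|\bsig-\bvh\|^2 + \|\nabla(u-u_{\mathrm{LS}})\|^2.
\end{equation*}
Inserting this into Step~1 and taking the minimum over admissible $\bvh$ yields the claim. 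The main obstacle is Step~3: producing the right algebraic rearrangement so that Young's inequality with parameter $1$ lets $\|\bp_{\calT}\|^2$ be absorbed on the left without creating a multiplicative constant on the right, which is what allows the final bound to appear with unit constants as stated.
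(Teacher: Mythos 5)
Your proof is correct and follows essentially the same route as the paper's: both hinge on testing the second discrete least-squares equation with $\bvh-\bsig_{\mathrm{LS}}$ (equivalently, the Galerkin orthogonality of $\calA$), the $L^2$-orthogonality of $\Pihp$ applied to the divergence terms, and a Young inequality with parameter $1$ that absorbs a quadratic term on the left so that the final bound carries unit constants. The only cosmetic differences are that you perform the Pythagoras split of $\|\nabla{\cdot}(\bsig-\bsig_{\mathrm{LS}})\|^2$ up front and absorb $\|\bvh-\bsig_{\mathrm{LS}}\|^2$ rather than $\|\bsig-\bsig_{\mathrm{LS}}\|^2$, which in fact yields a marginally sharper estimate (factors $\tfrac12$ on two of the terms) than the stated one.
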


\begin{proof}
Again let $\bsh\in \RTN\cap \HdivO$ be such that $\nabla{\cdot}\bsh=\Pihp(\nabla{\cdot}\bsig)$. Using~\eqref{eq:LSMF2} and~\eqref{eq:DLSMF2}, we have
\begin{align*}
\LO^2 \|\nabla{\cdot}(\bsig-\bsig_{\mathrm{LS}})\|^2
= {} & \LO^2 (\nabla{\cdot}(\bsig-\bsig_{\mathrm{LS}}),\nabla{\cdot}(\bsig-\bsig_{\mathrm{LS}}))\\
= {} & \LO^2 (\nabla{\cdot}(\bsig-\bsig_{\mathrm{LS}}),\nabla{\cdot}(\bsig-\bsh))
+ \LO^2 (\nabla{\cdot}(\bsig-\bsig_{\mathrm{LS}}),\nabla{\cdot}(\bsh-\bsig_{\mathrm{LS}}))\\
= {} & \LO^2 (\nabla{\cdot}(\bsig-\bsig_{\mathrm{LS}}),\nabla{\cdot}(\bsig-\bsh))
+ \LO^2 (\nabla{\cdot}(\bsig-\bsig_{\mathrm{LS}}),\nabla{\cdot}(\bsh-\bsig_{\mathrm{LS}}))\\
{} &  - \calA(\bsig-\bsig_{\mathrm{LS}},u-u_{\mathrm{LS}};\bsh-\bsig_{\mathrm{LS}},0)\\
= {} & \LO^2 (\nabla{\cdot}(\bsig-\bsig_{\mathrm{LS}}),\nabla{\cdot}(\bsig-\bsh))-((\bsig-\bsig_{\mathrm{LS}})+\nabla(u-u_{\mathrm{LS}}),\bsh-\bsig_{\mathrm{LS}})\\
= {} & \LO^2 (\nabla{\cdot}(\bsig-\bsh),\nabla{\cdot}(\bsig-\bsh))-((\bsig-\bsig_{\mathrm{LS}})+\nabla(u-u_{\mathrm{LS}}),\bsh-\bsig_{\mathrm{LS}}),
\end{align*}
where we used that $(\nabla{\cdot}\bsig_{\mathrm{LS}},\nabla{\cdot}(\bsig-\bsh))=(\nabla{\cdot}\bsh,\nabla{\cdot}(\bsig-\bsh))=0$ since both $\nabla{\cdot}\bsig_{\mathrm{LS}}$ and $\nabla{\cdot}\bsh$ belong to $\Pp(\calT)$ in the last equality. Adding and subtracting $\bsig$ in the second term on the right-hand side above and applying the Cauchy--Schwarz and Young inequalities implies that
\begin{align*}
& \LO^2 \|\nabla{\cdot}(\bsig-\bsig_{\mathrm{LS}})\|^2+\|\bsig-\bsig_{\mathrm{LS}}\|^2 \\
= {} & \LO^2 \|\nabla{\cdot}\bsig-\Pihp(\nabla{\cdot}\bsig)\|^2-(\nabla(u-u_{\mathrm{LS}}),\bsig-\bsig_{\mathrm{LS}})\\
{} &
-(\nabla(u-u_{\mathrm{LS}}),\bsh-\bsig)-(\bsig-\bsig_{\mathrm{LS}},\bsh-\bsig)\\
\leq {} & \LO^2 \|\nabla{\cdot}\bsig-\Pihp(\nabla{\cdot}\bsig)\|^2+ \|\nabla(u-u_{\mathrm{LS}})\|^2+ \|\bsig-\bsh\|^2+ \|\bsig-\bsig_{\mathrm{LS}}\|^2.
\end{align*}
We infer that
\begin{align*}
\LO^2 \|\nabla{\cdot}(\bsig-\bsig_{\mathrm{LS}})\|^2&\leq\LO^2 \|\nabla{\cdot}\bsig-\Pihp(\nabla{\cdot}\bsig)\|^2 + \|\nabla(u-u_{\mathrm{LS}})\|^2 + \|\bsig-\bsh\|^2.
\end{align*}
This finishes the proof since $\bsh$ is arbitrary.
\end{proof}

\ifIMA

\section*{Funding}

This project has received funding from the European Research Council (ERC) under the European Union's Horizon 2020 research and innovation program (grant agreement No 647134 GATIPOR).

\fi

\appendix

\section{$p$-robust constrained--unconstrained equivalence on a simplex} \label{app:constr}

We present in this appendix a way to remove the divergence constraint on a single simplex, and we do this in a polynomial-degree-robust way.
This equivalence of constrained and unconstrained local-best approximations is an important consequence of the result of \ifIMA\else Costabel and McIntosh~\fi\cite[Corollary~3.4]{Cost_McInt_Bog_Poinc_10}.

Recall the notation $\Eloc(\bm{v})$ from~\eqref{eq:local_minimizers}, where $\RTNK = \Pp(K;\R^\dim) + \bx \Pp(K)$ is the Raviart--Thomas--N\'ed\'elec space of degree $p$ on the simplex $K$, as well as that $h_K$ denotes the diameter of $K$ and $\varrho_K$ the diameter of the largest ball inscribed in $K$.

\begin{lemma}[Local $p$-robust constrained--unconstrained equivalence]\label{lem:local_const_equivalence}
Let a simplex $K \subset \R^d$, $d \geq 1$, and $\bv \in \HdivK$ be fixed. Let $\bth$ be defined as in~\eqref{eq_L2_proj_constraint}. Then, there exists a constant $C$, depending only on the space dimension $\dim$ and the shape-regularity parameter $\kappa_K \eq h_K / \varrho_K$ of $K$, such that
\begin{align}\label{eq:local_const_equivalence}
\Eloc(\bv) \leq \norm{\bv-\bth}_K+ \frac{h_K}{p+1}\norm{\nabla{\cdot}(\bv - \bth)}_K
\leq C \Eloc(\bv).
\end{align}
\end{lemma}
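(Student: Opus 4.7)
The lower inequality in~\eqref{eq:local_const_equivalence} is immediate. Since $\bth\in\RTNK$ satisfies the divergence constraint $\nabla\cdot\bth=\Pihp(\nabla\cdot\bv)$, it is admissible for the unconstrained minimization in the first term of $[\Eloc(\bv)]^2$, giving $\min_{\bv_K\in\RTNK}\norm{\bv-\bv_K}_K\le\norm{\bv-\bth}_K$; moreover, $\nabla\cdot\bv-\Pihp(\nabla\cdot\bv)=\nabla\cdot(\bv-\bth)$. Combining these via $\sqrt{a^2+b^2}\le a+b$ yields the claim.

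For the upper inequality, the divergence term on the left coincides exactly with the divergence contribution in $\Eloc(\bv)$, so it suffices to bound $\norm{\bv-\bth}_K$ by $C\,\Eloc(\bv)$. The central technical ingredient is the polynomial-degree-robust right inverse of divergence on a simplex, a corollary of Costabel and McIntosh~\cite[Cor.~3.4]{Cost_McInt_Bog_Poinc_10}: there exists a constant $C$ depending only on $\dim$ and $\kappa_K$ such that, for every $g\in\Pp(K)$, there is $\bz_g\in\RTNK$ with $\nabla\cdot\bz_g=g$ and $\norm{\bz_g}_K\le C(h_K/(p+1))\norm{g}_K$. This is obtained by splitting $g$ into its mean value and its mean-free part, applying the polynomial-preserving Bogovskii-type operator of Costabel--McIntosh to the latter on a ball inscribed in $K$, projecting the resulting polynomial lift into $\RTNK$ via its canonical interpolant (which is $p$-robust here since the lift has vanishing normal trace), and handling the constant part via an explicit affine vector.

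Armed with this right inverse, I construct a Fortin-type correction of the unconstrained $\bL^2$-best approximation. Let $\bv_K^\star\in\RTNK$ be the $\bL^2(K)$-orthogonal projection of $\bv$ onto $\RTNK$, so that $\norm{\bv-\bv_K^\star}_K=\min_{\bv_K\in\RTNK}\norm{\bv-\bv_K}_K$ and $(\bv-\bv_K^\star,\bw)_K=0$ for every $\bw\in\RTNK$. Setting $g\eq\Pihp(\nabla\cdot\bv)-\nabla\cdot\bv_K^\star\in\Pp(K)$, the vector $\bv_K^\star+\bz_g$ lies in $\RTNK$ with $\nabla\cdot(\bv_K^\star+\bz_g)=\Pihp(\nabla\cdot\bv)$ and is therefore admissible for the constrained minimization defining $\bth$. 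Minimality of $\bth$ then yields
\[
\norm{\bv-\bth}_K\le\norm{\bv-\bv_K^\star-\bz_g}_K\le\norm{\bv-\bv_K^\star}_K+C(h_K/(p+1))\norm{g}_K.
\]

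The remaining step, which I expect to be the main obstacle, is to bound $(h_K/(p+1))\norm{g}_K$ by $C\,\Eloc(\bv)$. My plan is a duality argument: the $p$-robust right inverse gives $\norm{g}_K$ controlled by $(p+1)/h_K$ times $\sup_{\bz\in\RTNK}(g,\nabla\cdot\bz)_K/\norm{\bz}_K$. Using $\nabla\cdot\bz\in\Pp(K)$, one has $(g,\nabla\cdot\bz)_K=(\nabla\cdot(\bv-\bv_K^\star),\nabla\cdot\bz)_K$, and integration by parts yields $-(\bv-\bv_K^\star,\nabla(\nabla\cdot\bz))_K+\langle(\bv-\bv_K^\star)\cdot\bn_K,\nabla\cdot\bz\rangle_{\p K}$. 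The bulk term vanishes by the $\bL^2$-orthogonality of $\bv_K^\star$, as $\nabla(\nabla\cdot\bz)\in\calP_{p-1}(K;\R^\dim)\subset\RTNK$, while the residual boundary pairing is estimated in a $p$-robust manner by combining polynomial trace inequalities for $\nabla\cdot\bz$ with the weak $H^{-1/2}$-normal-trace definition for $\bv-\bv_K^\star\in\HdivK$, carefully balancing scalings so that no adverse $p$-dependence creeps in. This produces $\norm{g}_K\le C((p+1)/h_K)\norm{\bv-\bv_K^\star}_K+C\norm{\nabla\cdot\bv-\Pihp(\nabla\cdot\bv)}_K$, and plugging this into the previous display completes the proof.
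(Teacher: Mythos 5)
Your overall skeleton --- reduce to bounding $\norm{\bv-\bth}_K$, build an admissible competitor for the constrained minimization by correcting the unconstrained $\bL^2(K)$-projection $\bv_K^\star$, and invoke the minimality of $\bth$ --- is exactly the paper's. The gap is in your central ingredient: the claimed right inverse of the divergence with the bound $\norm{\bm{z}_g}_K\le C\,\tfrac{h_K}{p+1}\norm{g}_K$ is false. Take $g\equiv1$: for any $\bm{z}\in\RTNK$ with $\nabla{\cdot}\bm{z}=1$ and any $\varphi\in H^1_0(K)$ one has $(1,\varphi)_K=-(\bm{z},\nabla\varphi)_K\le\norm{\bm{z}}_K\norm{\nabla\varphi}_K$, so choosing $\varphi$ as the solution of $-\Delta\varphi=1$ in $H^1_0(K)$ gives $\norm{\bm{z}}_K\ge\norm{\nabla\varphi}_K= c\,h_K\norm{1}_K$ with $c>0$ depending only on $\dim$ and $\kappa_K$ and, crucially, independent of $p$. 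Hence no factor $1/(p+1)$ is available (neither for the mean part nor, by taking $g$ a fixed mean-free linear polynomial, for the mean-free part): what Costabel--McIntosh actually yields is control of $\min_{\nabla{\cdot}\bm{z}=g}\norm{\bm{z}}_K$ by the dual norm $\sup_{\varphi\in H^1_0(K),\,\norm{\nabla\varphi}_K=1}(g,\varphi)_K$, which for a general $g\in\Pp(K)$ is only bounded by $C\,h_K\norm{g}_K$. With this corrected bound your combination gives $\norm{\bv-\bth}_K\le\norm{\bv-\bv_K^\star}_K+C\,h_K\norm{g}_K$, and your own target estimate $\norm{g}_K\le C\,\tfrac{p+1}{h_K}\norm{\bv-\bv_K^\star}_K+C\norm{\nabla{\cdot}\bv-\Pihp(\nabla{\cdot}\bv)}_K$ then leaves a spurious factor $(p+1)$, destroying precisely the $p$-robustness the lemma asserts. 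The subsequent duality step cannot repair this, since it quietly reuses the same false right-inverse bound.

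The repair, and what the paper does, is to keep the divergence misfit in the $H^{-1}$-type dual norm instead of converting it to $L^2$. Writing $\wbth$ for the unconstrained $\bL^2(K)$-projection of $\bv$ onto $\RTNK$, Costabel--McIntosh provides $\bvK\in\RTNK$ with $\nabla{\cdot}\bvK=\Pihp(\nabla{\cdot}\bv)$ and $\norm{\bvK-\wbth}_K\le C\sup_{\varphi\in H^1_0(K),\,\norm{\nabla\varphi}_K=1}(\Pihp(\nabla{\cdot}\bv)-\nabla{\cdot}\wbth,\varphi)_K$. Using the weak divergence of $\bv$ and of $\wbth$ against $\varphi\in H^1_0(K)$, the supremand becomes $(\Pihp(\nabla{\cdot}\bv)-\nabla{\cdot}\bv,\varphi-\Pihp(\varphi))_K-(\bv-\wbth,\nabla\varphi)_K$: the factor $h_K/(p+1)$ is gained from the orthogonality of $\nabla{\cdot}\bv-\Pihp(\nabla{\cdot}\bv)$ to $\Pp(K)$ combined with the $hp$-approximation property of $\Pihp$ applied to the \emph{test function}, while the second term is bounded directly by $\norm{\bv-\wbth}_K$. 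This is the mechanism your argument is missing; no $L^2\to L^2$ bound on a right inverse of the divergence can supply it.
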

\begin{proof}
Since $\nabla{\cdot}\bth = \Pihp(\nabla{\cdot} \bv)$ from~\eqref{eq_L2_proj_constraint}, the first inequality in~\eqref{eq:local_const_equivalence} is obvious, so we show the second one. Therein, $\frac{h_K}{p+1}\norm{\nabla{\cdot}(\bv-\bth)}_K \leq \Eloc(\bv)$ trivially holds true for the same reason, so it remains only to bound~$\norm{\bv-\bth}_K$.

Let $\wbth$ be the elementwise $\bL^2$-projection of $\bv$ into $\RTN$, so that
\[
[\Eloc(\bv)]^2 = \norm{\bv-\wbth}_K^2 + \frac{h_K^2}{(p+1)^2}\norm{\nabla{\cdot}\bv - \Pihp(\nabla{\cdot} \bv)}_K^2.
\]
It follows from~\cite[Corollary~3.4]{Cost_McInt_Bog_Poinc_10} that there exists
$\bvK \in \RTNK$ such that $\nabla{\cdot}\bvK = \Pihp(\nabla{\cdot}\bv)$ and
\begin{equation}\label{eq:local_stab}
\norm{\bvK-\wbth}_K \leq C \sup_{\substack{\varphi\in H^1_0(K)\\ \norm{\nabla\varphi}_K=1}}\left\{ (\Pihp(\nabla{\cdot}\bv) - \nabla{\cdot} \wbth,\varphi)_K\right\},
\end{equation}
where $C$ only depends on $\dim$ and $\kappa_{K}$. Since $(\nabla{\cdot}\bv,\varphi)_K+(\bv,\nabla\varphi)_K =0$, and since also $(\nabla{\cdot}\wbth,\varphi)_K+(\wbth,\nabla\varphi)_K =0$ for all $\varphi \in H^1_0(K)$, we see that
\[
(\Pihp(\nabla{\cdot}\bv),\varphi)_K - (\nabla{\cdot} \wbth,\varphi)_K = (\Pihp(\nabla{\cdot}\bv)-\nabla{\cdot}\bv,\varphi-\Pihp (\varphi))_K - (\bv-\wbth,\nabla\varphi)_K,
\]
where we have also freely subtracted $\Pihp (\varphi)$.
Therefore, the inequality~\eqref{eq:local_stab} combined with the approximation bound $\norm{\varphi-\Pihp(\varphi)}_K \leq C \frac{h_K}{p+1}\norm{\nabla\varphi}_K$, with a constant $C$ depending only on $\dim$ and $\kappa_{K}$, implies that
\[
\norm{\bvK-\wbth}_K \leq C \left\{ \norm{\bv-\wbth}^2_K + \Big[\frac{h_K}{p+1}\norm{\nabla{\cdot}\bv - \Pihp(\nabla{\cdot}\bv)}_K\Big]^2\right\}^\frac{1}{2} = C \Eloc(\bv).
\]

Finally, owing to the triangle inequality $\norm{\bv-\bvK}_K \leq \norm{\bv-\wbth}_K+\norm{\wbth-\bvK}_K$, we infer that $\norm{\bv-\bvK}_K \leq C \Eloc(\bv)$.
Consequently, the definition of $\bth$ as the minimizer in~\eqref{eq_L2_proj_constraint} implies that $\norm{\bv-\bth}_K \leq \norm{\bv-\bvK}_K$, and this yields the second bound in~\eqref{eq:local_const_equivalence}.
\end{proof}

\ifIMA
\bibliographystyle{IMANUM-BIB}
\else
\bibliographystyle{acm}
\fi
\bibliography{biblio}
\end{document}